\newtheorem{thm}{Theorem}[section]
\newtheorem{prop}{Proposition}[section]
\newtheorem{lem}{Lemma}[section]
\newtheorem{de}{Definition}[section]
\newtheorem{rem}{Remark}[section]
\numberwithin{equation}{section}
\begin{document}
\title[CR Optimal Gap Theorem]{An Optimal Gap Theorem in a Complete Strictly
Pseudoconvex CR $(2n+1)$-manifold}
\author{Shu-Cheng Chang}
\address{Department of Mathematics and Taida Institute for Mathematical
Sciences (TIMS), National Taiwan University, Taipei 10617, Taiwan}
\email{scchang@math.ntu.edu.tw}
\author{Yen-Wen Fan}
\address{Taida Institute for Mathematical Sciences, National Taiwan
University, Taipei 10617, Taiwan}
\email{fanyenwen@gmail.com }
\thanks{Research supported in part by MOST of Taiwan}
\subjclass{Primary 32V05, 32V20; Secondary 53C56.}
\keywords{Gap theorem, Li-Yau-Hamilton inequality, CR Hodge-Laplace, CR
Monment type estimate, Heat kernel, Subelliptic operator}

\begin{abstract}
In this paper, by applying a linear trace Li-Yau-Hamilton inequality for a
positive $(1,1)$-form solution of the CR Hodge-Laplace heat equation and
monotonicity of the heat equation deformation, we obtain an optimal gap
theorem for a complete strictly pseudocovex CR $(2n+1)$-manifold with
nonnegative pseudohermitian bisectional curvature and vanishing torsion. We
prove that if the average of the Tanaka-Webster scalar curvature over a ball
of radius $r$ centered at some point $o$ decays as $o\left( r^{-2}\right) $,
then the manifold is flat.
\end{abstract}

\maketitle

\section{Introduction}

In \cite{gw1}, \cite{s} and \cite{y}, it is conjectured that a complete
noncompact K\"{a}hler manifold of positive holomorphic bisectional curvature
of complex dimension $m$ is biholomorphic to $\mathbf{C}^{m}$. The first
result concerning this conjecture was obtained by Mok-Siu-Yau (\cite{msy})
and Mok (\cite{mok2}). Let $M$ be a complete noncompact K\"{a}hler manifold
of nonnegative holomorphic bisectional curvature of complex dimension $m\geq
2$. They proved that $M$ is isometrically biholomorphic to $\mathbf{C}^{m}$
with the standard flat metric under the assumptions of the maximum volume
growth condition 
\begin{equation*}
V_{o}\left( r\right) \geq \delta r^{2m}
\end{equation*}%
for some point $o\in M,\ \delta >0,$ $r(x)=d(o,x)$ and the scalar curvature $%
R$ decays as 
\begin{equation*}
R(x)\leq \frac{C}{1+r^{2+\varepsilon }},\ \ \ x\in M
\end{equation*}%
for $C>0$ and any arbitrarily small positive constant $\varepsilon $. Since
then there are several further works aiming to prove the optimal result and
reader is referred to \cite{mok1}, \cite{ctz}, \cite{cz}, \cite{n4} and \cite%
{nt2}. A key common ingredient used in the previous works such as \cite{msy}%
, \cite{n4} and \cite{nt2} is to solve the so-called Poincare Lelong
equation \ $\sqrt{-1}\partial \overline{\partial }u=\rho $, for a given $d$%
-closed real $(1,1)$- form $\rho $ and then show that $trace(\rho )=0$ by
using (\ref{2001a}). In particular in \cite{nt2}, Ni and Tam showed that the
solution $u(x)$ of $\sqrt{-1}\partial \overline{\partial }u=Ric$ is of $%
o(logr(x))$ growth with the extra condition $\lim \inf_{r\rightarrow \infty
}\exp \left( -ar^{2}\right) \int_{B_{o}\left( r\right) }R^{2}\left( y\right)
d\mu \left( y\right) <\infty $ for some $a>0$. Then the result follows from
the Liouville theorem for plurisubharmonic functions which asserts that any
continuous plurisubharmonic function with upper growth bound of $o(logr(x))$
must be a constant.

In 2012, L. Ni finally obtained an optimal gap theorem (\cite{n2}) on $M$
with nonnegative bisectional curvature without the maximum volume growth
condition, provided the following scalar decays 
\begin{equation}
\frac{1}{V_{o}\left( r\right) }\int_{B_{o}\left( r\right) }R\left( y\right)
d\mu \left( y\right) =o\left( r^{-2}\right) .  \label{2001a}
\end{equation}

In the paper of \cite{n2}, L. Ni adapted a different method which has also
succeeded in the recent resolution of the fundamental gap conjecture in \cite%
{ac}. The key step is , using a sharp differential estimate and monotonicity
of heat equation deformation of positive $(1,1)$-forms as in (\cite{n1}), it
provided an alternate argument of proving the above mentioned Liouville
theorem.

A Riemannian version of (\cite{msy}) was proved in \cite{gw2} shortly
afterwards. This present paper is concerned with an analogue of CR gap
theorem on a complete noncompact strictly pseudoconvex CR $(2n+1)$-manifold
with nonnegative bisectional curvature. Recently, enlightened by the work of 
\cite{n1} as above, \ we obtained the linear trace version of
Li-Yau-Hamilton inequality for positive solutions of the CR
Lichnerowicz-Laplacian heat equation and then CR monotonicity of heat
equation deformation of positive $(1,1)$-forms is available in order to
prove the following CR gap Theorem :

\begin{thm}
Let $M$ be a complete noncompact strictly pseudoconvex CR $(2n+1)$-manifold
with nonnegative bisectional curvature and vanishing torsion. Then $M$ is
flat if 
\begin{equation}
\frac{1}{V_{o}\left( r\right) }\int_{B_{o}\left( r\right) }S\left( y\right)
d\mu \left( y\right) =o\left( r^{-2}\right) ,  \label{2001}
\end{equation}%
for some point $o\in M.$ Here $S\left( y\right) $ is the Tanaka-Webster
scalar curvature and $V_{o}\left( r\right) $ is the volume of the ball $%
B_{o}\left( r\right) $ with respect to the Carnot-Carath\'{e}odory distance.
As a consequence if $M$ is not flat, then%
\begin{equation*}
\liminf_{r\longrightarrow \infty }\frac{r^{2}}{V_{o}\left( r\right) }%
\int_{B_{o}\left( r\right) }S\left( y\right) d\mu \left( y\right) >0
\end{equation*}%
for any $o\in M.$
\end{thm}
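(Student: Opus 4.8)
The plan is to transplant L. Ni's heat-flow argument (\cite{n2}, \cite{n1}) to the pseudohermitian setting, using the linear trace Li-Yau-Hamilton inequality and the monotonicity of the CR Hodge-Laplace heat deformation established above. Since $M$ carries nonnegative pseudohermitian bisectional curvature and vanishing torsion, the pseudohermitian curvature $(1,1)$-form $\rho(0)$, whose trace is the Tanaka-Webster scalar curvature $S$, is a nonnegative $(1,1)$-form, and flatness of $M$ is equivalent to $\rho(0)\equiv 0$. First I would evolve $\rho(0)$ by the CR Hodge-Laplace (Lichnerowicz-Laplacian) heat equation to obtain $\rho(t)$ for $t>0$; the vanishing-torsion hypothesis is precisely what makes the relevant Bochner identities parallel the K\"{a}hler case, and the preservation-of-positivity property of this flow keeps $\rho(t)\geq 0$ for all $t$. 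Its trace $u(x,t)=\operatorname{tr}\rho(x,t)$ then solves the sub-Laplacian heat equation $\partial_t u=\Delta_b u$ with $u(\cdot,0)=S$.

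The second step is to produce a monotone quantity. Applying the linear trace Li-Yau-Hamilton inequality to the positive solution $\rho(t)$ yields a differential Harnack estimate for the trace $u$; its decisive feature, as in the complex case, is that the time-scaling term enters with coefficient $1$, so that $t\,u(x,t)$ is nondecreasing in $t$ in the appropriate averaged sense (the CR analogue of the Ni-Tam monotonicity). Integrating the Harnack against the sub-Laplacian heat kernel $H(x,o,t)$ and invoking the semigroup identity $\int_M H(x,o,t)H(x,y,t)\,d\mu(x)=H(o,y,2t)$, I would obtain a nonnegative, nondecreasing function $\Phi(t)$ built from $t\,u(o,\cdot)$.

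The third step turns the average-decay hypothesis into pointwise decay. Under nonnegative Tanaka-Webster curvature and vanishing torsion, $M$ enjoys CR volume doubling and a sub-Riemannian Li-Yau heat-kernel bound $H(o,y,t)\leq C\,V_o(\sqrt{t})^{-1}\exp(-d(o,y)^2/Ct)$ relative to the Carnot-Carath\'{e}odory distance. Writing $u(o,t)=\int_M H(o,y,t)S(y)\,d\mu(y)$, decomposing $M$ into the dyadic annuli $B_o(2^{k+1}\sqrt{t})\setminus B_o(2^k\sqrt{t})$, and applying (\ref{2001}) together with volume doubling on each annulus, I would get $t\,u(o,t)\to 0$ as $t\to\infty$, hence $\Phi(t)\to 0$. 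Since $\Phi$ is nonnegative and nondecreasing with vanishing limit, $\Phi\equiv 0$, so $u\equiv 0$; because $\rho(t)\geq 0$ has vanishing trace, $\rho(t)\equiv 0$, and letting $t\to 0^+$ gives $\rho(0)\equiv 0$, i.e. $M$ is flat. The final assertion is then the contrapositive: the argument uses only that $\liminf_{r\to\infty}\frac{r^2}{V_o(r)}\int_{B_o(r)}S\,d\mu=0$, so if $M$ is not flat this liminf must be strictly positive.

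The step I expect to be the main obstacle is making the monotonicity rigorous on a complete noncompact CR manifold: justifying the integrations by parts by controlling the boundary terms at spatial infinity through the heat-kernel Gaussian decay, verifying that the linear trace Li-Yau-Hamilton inequality indeed integrates to a genuinely monotone $\Phi$ with the sharp time-scaling, and securing the sub-Riemannian volume-doubling and heat-kernel estimates in this curvature class. Throughout, the vanishing-torsion hypothesis is indispensable, as it is what reduces the pseudohermitian curvature evolution and its positivity preservation to a form parallel to the K\"{a}hler theory.
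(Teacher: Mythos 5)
Your overall architecture matches the paper's: evolve the pseudohermitian Ricci form by the CR Hodge--Laplace heat equation, take the trace $u$, combine a linear trace Li--Yau--Hamilton monotonicity for $tu$ with a moment-type estimate forcing $tu\rightarrow 0$, and conclude $u\equiv 0$. But there is a genuine gap at the step you describe as ``the time-scaling term enters with coefficient $1$, so that $tu$ is nondecreasing\dots as in the complex case.'' In the CR setting the Harnack quantity produced by the linear trace LYH inequality is \emph{not} $u_{t}+u/t$; it is the expression (\ref{3001}),
\[
-\Delta _{H}\Lambda \phi +\left( \bar{\partial}_{b}^{\ast }\Lambda \bar{\partial}_{b}+conj\right) \phi +\frac{u}{t}\ \geq \ 0,
\]
and the extra mixed term $\left( \bar{\partial}_{b}^{\ast }\Lambda \bar{\partial}_{b}+conj\right) \phi$ has no K\"{a}hler analogue (there the K\"{a}hler identities and $\bar{\partial}\,Ric=0$ remove it automatically). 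The heart of the proof is showing this term vanishes: one proves that $\sigma ^{\left( i\right) }=\Lambda \bar{\partial}_{b}\eta ^{\left( i\right) }$ satisfies $\left( \partial _{t}-\Delta _{b}\right) \Vert \sigma ^{\left( i\right) }\Vert \leq \Vert \sigma _{0}^{\left( i\right) }\Vert$, dominates it by a heat solution whose initial data is $\Vert \Lambda \bar{\partial}_{b}(\rho ^{\left( i\right) }Ric)\Vert \leq \frac{C}{R_{i}}\chi _{B_{2R_{i}}\backslash B_{R_{i}}}S$ --- here vanishing torsion enters crucially through $\bar{\partial}_{b}Ric=0$ --- and then lets $i\rightarrow \infty $. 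Without this, the monotonicity $\left( tu\right) _{t}\geq 0$ in (\ref{4006b}) does not follow from the LYH inequality, and your $\Phi (t)$ is not monotone.

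A second, smaller but still real, gap: you take for granted the existence, positivity preservation and regularity of the flow $\rho (t)$ on a complete noncompact CR manifold. The operator $\Delta _{H}$ is only subelliptic, and boundary regularity for the Dirichlet problem degenerates at characteristic points of $\partial \Omega $; the paper must construct special ``sweetsop'' exhaustion domains with only isolated, well-shaped characteristic points (Appendix A) to obtain $C^{2,\alpha }$ solutions, and then run a cut-off and limiting argument together with two uniqueness theorems to identify $tr_{h}\eta $ with $P_{t}S$ --- the moment-type estimate applies only to solutions of the form $P_{t}f$ with $f$ bounded, not to arbitrary nonnegative heat solutions. One must also verify $\nabla _{T}\eta =0$ in the limit, which is a hypothesis of the LYH theorem. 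Your heat-kernel and volume-doubling step is essentially right (the paper derives these from the Baudoin--Garofalo generalized curvature-dimension inequality, which is where vanishing torsion and nonnegative Ricci enter), and your concluding contrapositive is fine.
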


Here we adapt the method as in \cite{n2}. Below is the main idea in our
proof. We first work on degenerated parabolic systems in CR manifolds which
is different to K\"{a}hler manifolds :%
\begin{equation*}
\left \{ 
\begin{array}{lll}
\frac{\partial }{\partial t}\phi (x,t) & = & \Delta _{H}\phi (x,t), \\ 
\phi \left( x,0\right) & = & Ric(x)\geq 0.%
\end{array}%
\right.
\end{equation*}%
Here $\Delta _{H}$ is the CR Hodge-Laplacian operator, $Ric(x)=iR_{\alpha 
\bar{\beta}}\theta ^{\alpha }\wedge \theta ^{\overline{\beta }}$ is the
pseudohermitian Ricci form of a strictly pseudoconvex CR $(2n+1)$-manifold.

Let $M$ be a complete noncompact strictly pseudoconvex CR $(2n+1)$-manifold
with nonnegative bisectional curvature and vanishing torsion. It follows
from Proposition \ref{prop2-1} that there exists a long time solution $\phi
(x,t)$ with $\phi (x,t)\geq 0$ on $M\times \lbrack 0,\infty ).$ Now let $%
u(x,t)=\Lambda (\phi )$ which is nonnegative and satisfies the CR heat
equation with $u(x,0)=S(x).$ Li-Yau-Hamilton Harnack quantity (\ref{3001})
and monotonicity property (\ref{4006}) with vanishing mixed-term implies
that $tu\left( x,t\right) $ is nondecreasing in $t$ for any $x$. Finally,
the assumption (\ref{2001}) and CR moment type estimate (\ref{2013a}) imply $%
\lim_{t\rightarrow \infty }tu\left( x_{0},t\right) =0$. Hence the
monotonicity and maximum principle imply $tu\left( x,t\right) \equiv 0$ for
all $t>0$ and any $x\in M$ . The flatness then follows from $u\left(
x,0\right) =0$ which is clear by continuity.

The rest of the paper is organized as follows. In section $2$, we give an
introduction to pseudohermitian manifolds and some notations. In section $3$%
, we obtain the CR moment type estimate which is the first key estimate for
the proof of main theorem. In section $4,$ we relate the linear trace
Li-Yau-Hamilton type inequality of the CR Lichnerowicz-Laplacian heat
equation to a monotonicity formula of the heat solution. In section $5,$ we
prove the CR optimal gap Theorem.

\section{Preliminary}

First we introduce some basic materials in a pseudohermitian $(2n+1)$%
-manifold ( see \cite{l1}, \cite{l2} for more details ). Let $(M,\xi )$ be a 
$(2n+1)$-dimensional, orientable, contact manifold with contact structure $%
\xi $. A CR structure compatible with $\xi $ is an endomorphism $J:\xi
\rightarrow \xi $ such that $J^{2}=-1$. We also assume that $J$ satisfies
the following integrability condition: If $X$ and $Y$ are in $\xi $, then so
are $[JX,Y]+[X,JY]$ and $J([JX,Y]+[X,JY])=[JX,JY]-[X,Y]$.

Let $\left \{ T,Z_{\alpha },Z_{\bar{\alpha}}\right \} $ be a frame of $%
TM\otimes \mathbb{C}$, where $Z_{\alpha }$ is any local frame of $T_{1,0},\
Z_{\bar{\alpha}}=\overline{Z_{\alpha }}\in T_{0,1}$ and $T$ is the
characteristic vector field. Then $\left \{ \theta ,\theta ^{\alpha },\theta
^{\bar{\alpha}}\right \} $, which is the coframe dual to $\left \{
T,Z_{\alpha },Z_{\bar{\alpha}}\right \} $, satisfies 
\begin{equation}
d\theta =ih_{\alpha \overline{\beta }}\theta ^{\alpha }\wedge \theta ^{%
\overline{\beta }}  \label{72}
\end{equation}%
for some positive definite hermitian matrix of functions $(h_{\alpha \bar{%
\beta}})$, if we have this contact structure, we call such $M$ a strictly
pseudoconvex CR $(2n+1)$-manifold.

The Levi form $\left \langle \ ,\ \right \rangle _{L_{\theta }}$ is the
Hermitian form on $T_{1,0}$ defined by%
\begin{equation*}
\left \langle Z,W\right \rangle _{L_{\theta }}=-i\left \langle d\theta
,Z\wedge \overline{W}\right \rangle .
\end{equation*}%
We can extend $\left \langle \ ,\ \right \rangle _{L_{\theta }}$ to $T_{0,1}$
by defining $\left \langle \overline{Z},\overline{W}\right \rangle
_{L_{\theta }}=\overline{\left \langle Z,W\right \rangle }_{L_{\theta }}$
for all $Z,W\in T_{1,0}$. The Levi form induces naturally a Hermitian form
on the dual bundle of $T_{1,0}$, denoted by $\left \langle \ ,\
\right
\rangle _{L_{\theta }^{\ast }}$, and hence on all the induced tensor
bundles. Integrating the Hermitian form (when acting on sections) over $M$
with respect to the volume form $d\mu =\theta \wedge (d\theta )^{n}$, we get
an inner product on the space of sections of each tensor bundle.

The pseudohermitian connection of $(J,\theta )$ is the connection $\nabla $
on $TM\otimes \mathbb{C}$ (and extended to tensors) given in terms of a
local frame $Z_{\alpha }\in T_{1,0}$ by

\begin{equation*}
\nabla Z_{\alpha }=\theta _{\alpha }{}^{\beta }\otimes Z_{\beta },\quad
\nabla Z_{\bar{\alpha}}=\theta _{\bar{\alpha}}{}^{\bar{\beta}}\otimes Z_{%
\bar{\beta}},\quad \nabla T=0,
\end{equation*}%
where $\theta _{\alpha }{}^{\beta }$ are the $1$-forms uniquely determined
by the following equations:

\begin{equation*}
\begin{split}
d\theta ^{\beta }& =\theta ^{\alpha }\wedge \theta _{\alpha }{}^{\beta
}+\theta \wedge \tau ^{\beta }, \\
0& =\tau _{\alpha }\wedge \theta ^{\alpha }, \\
0& =\theta _{\alpha }{}^{\beta }+\theta _{\bar{\beta}}{}^{\bar{\alpha}},
\end{split}%
\end{equation*}%
We can write (by Cartan lemma) $\tau _{\alpha }=A_{\alpha \gamma }\theta
^{\gamma }$ with $A_{\alpha \gamma }=A_{\gamma \alpha }$. The curvature of
Webster-Stanton connection, expressed in terms of the coframe $\{ \theta
=\theta ^{0},\theta ^{\alpha },\theta ^{\bar{\alpha}}\}$, is 
\begin{equation*}
\begin{split}
\Pi _{\beta }{}^{\alpha }& =\overline{\Pi _{\bar{\beta}}{}^{\bar{\alpha}}}%
=d\omega _{\beta }{}^{\alpha }-\omega _{\beta }{}^{\gamma }\wedge \omega
_{\gamma }{}^{\alpha }, \\
\Pi _{0}{}^{\alpha }& =\Pi _{\alpha }{}^{0}=\Pi _{0}{}^{\bar{\beta}}=\Pi _{%
\bar{\beta}}{}^{0}=\Pi _{0}{}^{0}=0.
\end{split}%
\end{equation*}%
Webster showed that $\Pi _{\beta }{}^{\alpha }$ can be written 
\begin{equation*}
\Pi _{\beta }{}^{\alpha }=R_{\beta }{}^{\alpha }{}_{\rho \bar{\sigma}}\theta
^{\rho }\wedge \theta ^{\bar{\sigma}}+W_{\beta }{}^{\alpha }{}_{\rho }\theta
^{\rho }\wedge \theta -W^{\alpha }{}_{\beta \bar{\rho}}\theta ^{\bar{\rho}%
}\wedge \theta +i\theta _{\beta }\wedge \tau ^{\alpha }-i\tau _{\beta
}\wedge \theta ^{\alpha }
\end{equation*}%
where the coefficients satisfy 
\begin{equation*}
R_{\beta \bar{\alpha}\rho \bar{\sigma}}=\overline{R_{\alpha \bar{\beta}%
\sigma \bar{\rho}}}=R_{\bar{\alpha}\beta \bar{\sigma}\rho }=R_{\rho \bar{%
\alpha}\beta \bar{\sigma}},\ \ \ W_{\beta \bar{\alpha}\gamma }=W_{\gamma 
\bar{\alpha}\beta }.
\end{equation*}%
Here $R_{\gamma }{}^{\delta }{}_{\alpha \bar{\beta}}$ is the pseudohermitian
curvature tensor, $R_{\alpha \bar{\beta}}=R_{\gamma }{}^{\gamma }{}_{\alpha 
\bar{\beta}}$ is the pseudohermitian Ricci curvature tensor, $S=R_{\alpha 
\overline{\alpha }}$ is the Tanaka-Webster scalar curvature and $A_{\alpha
\beta }$ \ is the torsion tensor. Furthermore, we define the bi-sectional
curvature 
\begin{equation*}
R_{\alpha \bar{\alpha}\beta \overline{\beta }}(X,Y)=R_{\alpha \bar{\alpha}%
\beta \overline{\beta }}X_{\alpha }X_{\overline{\alpha }}Y_{\beta }Y_{\bar{%
\beta}}
\end{equation*}%
and the bi-torsion tensor 
\begin{equation*}
T_{\alpha \overline{\beta }}(X,Y):=i(A_{\bar{\beta}\bar{\rho}}X_{\rho
}Y_{\alpha }-A_{\alpha \rho }X_{\bar{\rho}}Y_{\bar{\beta}})
\end{equation*}%
and the torsion tensor \ 
\begin{equation*}
Tor(X,Y):=h^{\alpha \bar{\beta}}T_{\alpha \overline{\beta }}(X,Y)=i(A_{%
\overline{\alpha }\bar{\rho}}X_{\rho }Y_{\alpha }-A_{\alpha \rho }X_{\bar{%
\rho}}Y_{\overline{\alpha }})
\end{equation*}%
for any $X=X_{\overline{\alpha }}Z_{\alpha },\ Y=Y_{\overline{\alpha }%
}Z_{\alpha }$ in $T_{1,0}.$

We will denote components of covariant derivatives with indices preceded by
comma; thus write $A_{\alpha \beta ,\gamma }$. The indices $\{0,\alpha ,\bar{%
\alpha}\}$ indicate derivatives with respect to $\{T,Z_{\alpha },Z_{\bar{%
\alpha}}\}$. For derivatives of a scalar function, we will often omit the
comma, for instance, $u_{\alpha }=Z_{\alpha }u,\ u_{\alpha \bar{\beta}}=Z_{%
\bar{\beta}}Z_{\alpha }u-\omega _{\alpha }{}^{\gamma }(Z_{\bar{\beta}%
})Z_{\gamma }u.$

For a smooth real-valued function $u$, the subgradient $\nabla _{b}$ is
defined by $\nabla _{b}u\in \xi $ and $\left \langle Z,\nabla
_{b}u\right
\rangle _{L_{\theta }}=du(Z)$ for all vector fields $Z$ tangent
to contact plane. Locally $\nabla _{b}u=\sum_{\alpha }u_{\bar{\alpha}%
}Z_{\alpha }+u_{\alpha }Z_{\bar{\alpha}}$. We also denote $u_{0}=Tu$.

We can use the connection to define the subhessian as the complex linear map 
\begin{equation*}
(\nabla ^{H})^{2}u:T_{1,0}\oplus T_{0,1}\rightarrow T_{1,0}\oplus T_{0,1}
\end{equation*}%
by 
\begin{equation*}
(\nabla ^{H})^{2}u(Z)=\nabla _{Z}\nabla _{b}u.\ 
\end{equation*}%
In particular,

\begin{equation*}
|\nabla _{b}u|^{2}=2u_{\alpha }u_{\overline{\alpha }},\quad |\nabla
_{b}^{2}u|^{2}=2(u_{\alpha \beta }u_{\overline{\alpha }\overline{\beta }%
}+u_{\alpha \overline{\beta }}u_{\overline{\alpha }\beta }).
\end{equation*}

Also 
\begin{equation*}
\begin{array}{c}
\Delta _{b}u=Tr\left( (\nabla ^{H})^{2}u\right) =\sum_{\alpha }(u_{\alpha 
\bar{\alpha}}+u_{\bar{\alpha}\alpha }).%
\end{array}%
\end{equation*}%
The Kohn-Rossi Laplacian $\square _{b}$ on functions is defined by 
\begin{equation*}
\square _{b}\varphi =2\overline{\partial }_{b}^{\ast }\overline{\partial }%
_{b}\varphi =(\Delta _{b}+inT)\varphi =-2\varphi _{\overline{\alpha }}{}^{%
\overline{\alpha }}
\end{equation*}%
and on $(p,q)$-forms is defined by%
\begin{equation*}
\square _{b}=2(\overline{\partial }_{b}^{\ast }\overline{\partial }_{b}+%
\overline{\partial }_{b}\overline{\partial }_{b}^{\ast }).
\end{equation*}

Next we recall the following commutation relations (\cite{l1}). \ Let $%
\varphi $ be a scalar function and $\sigma =\sigma _{\alpha }\theta ^{\alpha
}$ be a $\left( 1,0\right) $ form, then we have

\begin{equation*}
\begin{array}{ccl}
\varphi _{\alpha \beta } & = & \varphi _{\beta \alpha }, \\ 
\varphi _{\alpha \bar{\beta}}-\varphi _{\bar{\beta}\alpha } & = & ih_{\alpha 
\overline{\beta }}\varphi _{0}, \\ 
\varphi _{0\alpha }-\varphi _{\alpha 0} & = & A_{\alpha \beta }\varphi _{%
\bar{\beta}}, \\ 
\sigma _{\alpha ,0\beta }-\sigma _{\alpha ,\beta 0} & = & \sigma _{\alpha ,%
\bar{\gamma}}A_{\gamma \beta }-\sigma _{\gamma }A_{\alpha \beta ,\bar{\gamma}%
}, \\ 
\sigma _{\alpha ,0\bar{\beta}}-\sigma _{\alpha ,\bar{\beta}0} & = & \sigma
_{\alpha ,\gamma }A_{\bar{\gamma}\bar{\beta}}+\sigma _{\gamma }A_{\bar{\gamma%
}\bar{\beta},\alpha },%
\end{array}%
\end{equation*}%
and 
\begin{equation*}
\begin{array}{ccl}
\sigma _{\alpha ,\beta \gamma }-\sigma _{\alpha ,\gamma \beta } & = & 
iA_{\alpha \gamma }\sigma _{\beta }-iA_{\alpha \beta }\sigma _{\gamma }, \\ 
\sigma _{\alpha ,\bar{\beta}\bar{\gamma}}-\sigma _{\alpha ,\bar{\gamma}\bar{%
\beta}} & = & ih_{\alpha \overline{\beta }}A_{\bar{\gamma}\bar{\rho}}\sigma
_{\rho }-ih_{\alpha \overline{\gamma }}A_{\bar{\beta}\bar{\rho}}\sigma
_{\rho }, \\ 
\sigma _{\alpha ,\beta \bar{\gamma}}-\sigma _{\alpha ,\bar{\gamma}\beta } & =
& ih_{\beta \overline{\gamma }}\sigma _{\alpha ,0}+R_{\alpha \bar{\rho}%
}{}_{\beta \bar{\gamma}}\sigma _{\rho }.%
\end{array}%
\end{equation*}

Moreover for multi-index $I=\left( \alpha _{1},...,\alpha _{p}\right) ,\ 
\bar{J}=\left( \bar{\beta}_{1},...,\bar{\beta}_{q}\right) ,$ we denote $%
I(\alpha _{k}=\mu )=\left( \alpha _{1},...,\alpha _{k-1},\mu ,\alpha
_{k+1},...,\alpha _{p}\right) .$ Then%
\begin{equation*}
\begin{array}{ccl}
\eta _{I\bar{J},\mu \lambda }-\eta _{I\bar{J},\lambda \mu } & = & 
i\sum_{k=1}^{p}\left( \eta _{I(\alpha _{k}=\mu )\bar{J}}A_{\alpha
_{k}\lambda }-\eta _{I(\alpha _{k}=\lambda )\bar{J}}A_{\alpha _{k}\mu
}\right) \\ 
&  & -i\sum_{k=1}^{q}\left( \eta _{I\bar{J}\left( \bar{\beta}_{k}=\bar{\gamma%
}\right) }h_{\bar{\beta}_{k}\mu }A_{\lambda }^{\bar{\gamma}}-\eta _{I\bar{J}%
\left( \bar{\beta}_{k}=\bar{\gamma}\right) }h_{\bar{\beta}_{k}\lambda
}A_{\mu }^{\bar{\gamma}}\right) ,%
\end{array}%
\end{equation*}%
and

\begin{equation*}
\begin{array}{ccl}
\eta _{I\bar{J},\lambda \bar{\mu}}-\eta _{I\bar{J},\bar{\mu}\lambda } & = & 
ih_{\lambda \bar{\mu}}\eta _{I\bar{J},0}+\sum_{k=1}^{p}\eta _{I\left( \alpha
_{k}=\gamma \right) \bar{J}}R_{\alpha _{k}\mathit{\ \ }\lambda \bar{\mu}}^{%
\mathit{\ \ \ }\gamma }+\sum_{k=1}^{q}\eta _{I\bar{J}\left( \bar{\beta}_{k}=%
\bar{\gamma}\right) }R_{\bar{\beta}_{k}\mathit{\ \ \ }\lambda \bar{\mu}}^{%
\mathit{\ \ \ \ }\bar{\gamma}} \\ 
\eta _{I\bar{J},0\mu }-\eta _{I\bar{J},\mu 0} & = & A_{\mu }^{\bar{\rho}%
}\eta _{I\bar{J},\bar{\rho}}-\sum_{k=1}^{p}A_{\alpha _{k}\mu ,\bar{\rho}%
}\eta _{I\left( \alpha _{k}=\rho \right) \bar{J}}+\sum_{k=1}^{q}A_{\mu \rho ,%
\bar{\beta}_{k}}\eta _{I\bar{J}\left( \bar{\beta}_{k}=\bar{\rho}\right) }.%
\end{array}%
\end{equation*}

Finally, we recall the following definition.

\begin{de}
\label{d1} A piecewise smooth curve $\gamma :[0,1]\rightarrow M$ is said to
be horizontal if $\gamma \ ^{\prime }(t)\in \xi $ whenever $\gamma \
^{\prime }(t)$ exists. The length of $\gamma $ is then defined by 
\begin{equation*}
l(\gamma )=\int_{0}^{1}\left \langle \gamma \ ^{\prime }(t),\gamma \
^{\prime }(t)\right \rangle _{L_{\theta }}^{\frac{1}{2}}dt.
\end{equation*}%
The Carnot-Carath\'{e}odory distance between two points $p,\ q\in M$ is 
\begin{equation*}
d_{c}(p,q)=\text{inf}\left \{ l(\gamma )|\ \gamma \in C_{p,q}\right \} ,
\end{equation*}%
where $C_{p,q}$ is the set of all horizontal curves joining $p$ and $q$.
\end{de}

\section{CR Moment-Type Estimates}

Let $(M,J,\theta )$ be a strictly pseudoconvex CR $(2n+1)$-manifold. In our
recent paper (\cite{cct} and \cite{ccf}), we consider the CR Hodge-Laplacian%
\begin{equation*}
\Delta _{H}=-\frac{1}{2}(\square _{b}+\overline{\square }_{b})
\end{equation*}%
for Kohn-Rossi Laplacian $\square _{b}$. For any $(1,1)$-form $\phi
(x,t)=\phi _{\alpha \overline{\beta }}\theta ^{\alpha }\wedge \theta ^{%
\overline{\beta }},$ we study the CR Hodge-Laplacian heat equation on $%
M\times \lbrack 0,T)$ 
\begin{equation}
\frac{\partial }{\partial t}\phi (x,t)=\Delta _{H}\phi (x,t).  \label{41}
\end{equation}%
It follows from the CR Bochner-Weitzenbock Formula (\cite{ccf}) that the CR
parabolic equation (\ref{41}) is equivalent to the CR analogue of
Lichnerowicz-Laplacian heat equation : 
\begin{equation}
\frac{\partial }{\partial t}\phi _{\alpha \bar{\beta}}=\Delta _{b}\phi
_{\alpha \bar{\beta}}+2R_{\alpha \bar{\gamma}\mu \bar{\beta}}\phi _{\gamma 
\bar{\mu}}-(R_{\gamma \bar{\beta}}\phi _{\alpha \bar{\gamma}}+R_{\alpha \bar{%
\gamma}}\phi _{\gamma \bar{\beta}}).  \label{70}
\end{equation}%
In this section, we consider the following Dirichlet problem of degenerate
parabolic systems :

\begin{equation}
\left \{ 
\begin{array}{llll}
\left( \frac{\partial }{\partial t}-\Delta _{H}\right) \phi & = & 0, & \text{%
on }\Omega \times \lbrack 0,\infty ), \\ 
\phi \left( x,t\right) & = & 0, & \text{on }\partial \Omega \times \lbrack
0,\infty ), \\ 
\phi \left( x,0\right) & = & \phi _{ini}\left( x\right) & \text{on }\Omega .%
\end{array}%
\right.  \label{2002}
\end{equation}

In contrast to K\"{a}hler case, the regularity of a solution for $\Delta
_{H} $ up to $\partial \Omega $ may depend on geometry around the
characteristic point at the boundary ( \cite{J1} and \cite{J2}) in the CR
setting. In fact,

\begin{prop}
\label{prop2-1}There exists "sweetsop" exhaustion domains $\Omega _{\mu }$
such that the solutions $\phi _{\mu }$ of (\ref{2002}) are $C\left(
C^{2,\alpha }\left( \bar{\Omega}_{\mu },\Lambda ^{1,1}\right) ,[0,T)\right)$.
\end{prop}

We will give a detail proof of Proposition \ref{prop2-1} in Appendix \textbf{%
A. }After the construction of the "sweetsop" exhaustion domain $\Omega _{\mu
}$ for $\Delta _{H}$ as in Proposition \ref{prop2-1}, one is able to apply
semigroup method (\cite{p}) to obtain better regularity of the solution of
the CR Lichnerowitz-Laplacian heat equation (\ref{2002}) which depends on
regularity of the initial condition. One more tensor maximum principle below
is needed in the proof of main theorem in order to have nonnegativity of the
constructed solution $\phi _{\mu }$ if the initial data is nonnegative.

\begin{prop}
\label{prop2-2} Let $(M,J,\theta )$ be a strictly pseudoconvex CR $(2n+1)$%
-manifold with nonnegative bisectional curvature. Let $\Omega $ be bounded
domain in $M.$ Assume that $\phi \left( x,t\right) $ is a $\left( 1,1\right) 
$-form satisfies%
\begin{equation*}
\left \{ 
\begin{array}{llll}
\left( \frac{\partial }{\partial t}-\Delta _{H}\right) \phi & = & 0, & \text{%
on }\Omega \times \lbrack 0,\infty ), \\ 
\phi \left( x,t\right) & \geq & 0, & \text{on }\partial \Omega \times
\lbrack 0,\infty ), \\ 
\phi \left( x,0\right) & \geq & 0 & \text{on }\Omega .%
\end{array}%
\right.
\end{equation*}%
Then $\phi \left( x,t\right) \geq 0$ on $\Omega \times \lbrack 0,\infty ).$
\end{prop}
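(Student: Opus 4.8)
The plan is to establish nonnegativity of the $(1,1)$-form solution via a tensor maximum principle adapted to the degenerate (subelliptic) setting. The evolution equation is most conveniently written in the Lichnerowicz-Laplacian form (\ref{70}), namely
\begin{equation*}
\frac{\partial }{\partial t}\phi _{\alpha \bar{\beta}}=\Delta _{b}\phi _{\alpha \bar{\beta}}+2R_{\alpha \bar{\gamma}\mu \bar{\beta}}\phi _{\gamma \bar{\mu}}-(R_{\gamma \bar{\beta}}\phi _{\alpha \bar{\gamma}}+R_{\alpha \bar{\gamma}}\phi _{\gamma \bar{\beta}}),
\end{equation*}
since the zeroth-order reaction terms are exactly what must be controlled. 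The strategy is the classical Hamilton-type argument: introduce a perturbation $\tilde{\phi}_{\alpha \bar{\beta}}=\phi _{\alpha \bar{\beta}}+\varepsilon (t+1)h_{\alpha \bar{\beta}}$ for small $\varepsilon >0$, so that $\tilde{\phi}>0$ strictly at $t=0$ and on $\partial \Omega \times [0,\infty )$. Suppose for contradiction that $\tilde{\phi}$ fails to remain positive definite; let $t_{0}>0$ be the first time and $x_{0}\in \Omega$ the interior point where $\tilde{\phi}$ acquires a zero eigenvalue, with null eigenvector $v=v^{\alpha}Z_{\alpha}$. Here I would note that $x_{0}$ lies in the interior, since $\tilde{\phi}$ is strictly positive on the parabolic boundary.

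First I would extend $v$ to a neighborhood of $(x_{0},t_{0})$ so that $v$ is covariantly constant at $x_{0}$ (i.e.\ $\nabla _{b}v=0$ and $v_{0}=0$ at that point) and the null eigenvalue condition holds along a curve. Then the scalar function $f=\tilde{\phi}_{\alpha \bar{\beta}}v^{\alpha}v^{\bar{\beta}}$ satisfies $f\geq 0$ up to time $t_{0}$, attains $f(x_{0},t_{0})=0$, and hence at the first-touching point obeys the standard first- and second-order conditions $\partial _{t}f\leq 0$, $\nabla _{b}f=0$, and $\Delta _{b}f\geq 0$. Applying $\partial _{t}-\Delta _{b}$ to $f$, the covariant-constancy of $v$ eliminates all derivative terms hitting $v$, so that at $(x_{0},t_{0})$ one obtains
\begin{equation*}
\left( \frac{\partial }{\partial t}-\Delta _{b}\right) f=2R_{\alpha \bar{\gamma}\mu \bar{\beta}}\phi _{\gamma \bar{\mu}}v^{\alpha}v^{\bar{\beta}}-(R_{\gamma \bar{\beta}}\phi _{\alpha \bar{\gamma}}+R_{\alpha \bar{\gamma}}\phi _{\gamma \bar{\beta}})v^{\alpha}v^{\bar{\beta}}-\varepsilon h_{\alpha \bar{\beta}}v^{\alpha}v^{\bar{\beta}}.
\end{equation*}
The heart of the matter is to show the right-hand side is strictly negative, contradicting $\partial _{t}f-\Delta _{b}f\leq 0$ being compatible with the reaction terms. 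At the null direction $\phi _{\alpha \bar{\gamma}}v^{\alpha}=0$, so the two Ricci contraction terms involving $v$ vanish; by the nonnegativity of $\phi$ (as a limit from $t<t_{0}$) and the \emph{nonnegative bisectional curvature} hypothesis, the curvature term $R_{\alpha \bar{\gamma}\mu \bar{\beta}}\phi _{\gamma \bar{\mu}}v^{\alpha}v^{\bar{\beta}}$ is nonnegative, being a contraction of two nonnegative Hermitian forms along a shared index structure. Thus the full reaction contribution is $\geq -\varepsilon h_{\alpha \bar{\beta}}v^{\alpha}v^{\bar{\beta}}<0$, forcing $\partial _{t}f>\Delta _{b}f$ at the touching point, the desired contradiction. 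Letting $\varepsilon \to 0$ then yields $\phi \geq 0$ on $\Omega \times [0,\infty )$.

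The main obstacle I anticipate is twofold. First, the operator $\Delta _{b}$ is only subelliptic rather than elliptic, so the second-order maximum-principle conditions at an interior touching point require justification that degeneracy along the characteristic direction $T$ does not destroy the $\Delta _{b}f\geq 0$ inequality; this is where the missing vertical diffusion must be handled, presumably using that the relevant derivatives at the touching point are horizontal, together with the commutation relations recorded in Section~2 to control the cross-terms and any $\phi _{\alpha \bar{\beta},0}$ contributions that arise when differentiating along $v$. Second, the technical verification that the bisectional-curvature term $R_{\alpha \bar{\gamma}\mu \bar{\beta}}\phi _{\gamma \bar{\mu}}v^{\alpha}v^{\bar{\beta}}$ is genuinely nonnegative must use the symmetry $R_{\beta \bar{\alpha}\rho \bar{\sigma}}=R_{\rho \bar{\alpha}\beta \bar{\sigma}}$ stated in the preliminaries to diagonalize the Hermitian form $\phi$ simultaneously and express the contraction as a sum of products of nonnegative sectional quantities. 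Once these two subelliptic subtleties are dispatched, the argument is the standard Hamilton tensor maximum principle.
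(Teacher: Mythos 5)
Your strategy is exactly the one the paper relies on: the paper gives no argument of its own for Proposition \ref{prop2-2}, deferring entirely to Proposition 11.1 of \cite{nn}, which is precisely this Hamilton-type tensor maximum principle (perturbation by $\varepsilon(t+1)h_{\alpha\bar\beta}$, first-touching time, null eigenvector, nonnegativity of the reaction terms via nonnegative bisectional curvature). Your worry about the subellipticity of $\Delta_b$ is also unfounded in the direction you need it: at an interior spatial minimum of the scalar $f$ the full horizontal Hessian is positive semidefinite, and $\Delta_b f\geq 0$ is just its trace, so degenerate ellipticity suffices.

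However, as literally written your final step does not close, for two related reasons. First, the null eigenvector $v$ annihilates $\tilde\phi$, not $\phi$, so $\phi_{\alpha\bar\gamma}v^{\alpha}=-\varepsilon(t+1)h_{\alpha\bar\gamma}v^{\alpha}\neq 0$ and the two Ricci contraction terms do \emph{not} vanish as you claim; they contribute $+2\varepsilon(t+1)\,R_{\alpha\bar\beta}v^{\alpha}v^{\bar\beta}$, which is exactly cancelled by the $\varepsilon$-correction hidden in the bisectional term, because $h_{\alpha\bar\beta}$ is a steady state of the reaction ODE ($2R_{\alpha\bar\gamma\mu\bar\beta}h_{\gamma\bar\mu}=R_{\gamma\bar\beta}h_{\alpha\bar\gamma}+R_{\alpha\bar\gamma}h_{\gamma\bar\beta}=2R_{\alpha\bar\beta}$). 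You need to say this; it is the reason the argument survives the perturbation. Second, the $\varepsilon$-term in your displayed identity has the wrong sign: since $\partial_t\bigl(\varepsilon(t+1)h_{\alpha\bar\beta}\bigr)=+\varepsilon h_{\alpha\bar\beta}$, one gets
\begin{equation*}
\left(\frac{\partial}{\partial t}-\Delta_b\right)f
=2R_{\alpha\bar\gamma\mu\bar\beta}\tilde\phi_{\gamma\bar\mu}v^{\alpha}v^{\bar\beta}
+\varepsilon\,h_{\alpha\bar\beta}v^{\alpha}v^{\bar\beta}
\;\geq\;\varepsilon\,|v|^{2}>0
\end{equation*}
at the touching point, and it is this \emph{strict positivity} that contradicts $\partial_t f\leq 0\leq\Delta_b f$. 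Your version, ``the reaction contribution is $\geq-\varepsilon h_{\alpha\bar\beta}v^{\alpha}v^{\bar\beta}<0$, forcing $\partial_t f>\Delta_b f$,'' is a non sequitur: a negative lower bound forces nothing. Both defects are repairable sign-bookkeeping, not missing ideas, but the proof is not correct until they are fixed.
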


\begin{proof}
Similar to proposition 11.1 in \cite{nn}.
\end{proof}

The first key estimate for the proof of main theorem is the moment type
estimate. This estimate is first introduced by L. Ni (\cite{n3}). By using
Li-Yau type heat kernel estimate, he proved that a nonnegative solution $%
u(x,t)$ of the heat equation are $t^{d/2}$ growth if and only if the average
function $k\left( x,r\right) :=\frac{1}{V\left( r\right) }\int_{B_{x}\left(
r\right) }f\left( y\right) dy$ of the initial data $f\left( y\right) $ grows
as $r^{d}$ in a certain complete Kaehler manifold. In our CR setting, we
only has the CR moment type estimate for a nonnegative heat solution which
can be express as $P_{t}f$ for a smooth bounded function $f$ on $M$. In
contrast to K\"{a}her case, in general, we do not know if any nonnegative
heat solution could hold.

To introduce our version, we will follow from semigroup method as in \cite{m}
( also \cite{bbgm}). It is known that the heat semigroup $\left(
P_{t}\right) _{t\geq 0}$ is given by 
\begin{equation*}
P_{t}=\int_{0}^{\infty }e^{-\lambda t}dE_{\lambda }
\end{equation*}%
for the spectral decomposition of $\Delta _{b}=-\int_{0}^{\infty }\lambda
dE_{\lambda }$ in $L^{2}\left( M\right) $. It is a one-parameter family of
bounded operators on $L^{2}\left( M\right) .$ We denote%
\begin{equation*}
P_{t}f\left( x\right) =\int_{M}p\left( x,y,t\right) f\left( y\right) d\mu
\left( y\right) ,
\end{equation*}%
\ \ \ for $f\in C_{0}^{\infty }\left( M\right) .$ Here $p\left( x,y,t\right)
>0$ is the so-called symmetric heat kernel associated to $P_{t}$. Due to
hypoellipticity of $\Delta _{b},$ the function $\left( x,t\right)
\rightarrow P_{t}f\left( x\right) $ is smooth on $M\times \left( 0,\infty
\right) .$

In the following we use $V\left( r\right) $ and $B_{x}\left( r\right) $
denote the volume of a unit ball with respect to the Carnot-Carath\'{e}odory
distance and measure $d\mu =\theta \wedge \left( d\theta \right) ^{n}$. We
recall some facts from \cite{m} ( also \cite{bg} and \cite{bbgm}). For $%
f,g,h\in C^{\infty }\left( M\right) ,$ we define

(i) 
\begin{equation*}
\begin{array}{ccl}
\Gamma \left( f,g\right) & = & \frac{1}{2}\Delta _{b}\left( fg\right)
-f\Delta _{b}g-g\Delta _{b}f.%
\end{array}%
\end{equation*}

(ii) 
\begin{equation*}
\begin{array}{ccl}
\Gamma _{2}\left( f,g\right) & = & \frac{1}{2}\left[ \Delta _{b}\Gamma
\left( f,g\right) -\Gamma \left( f,\Delta _{b}g\right) -\Gamma \left(
g,\Delta _{b}f\right) \right] .%
\end{array}%
\end{equation*}

(iii) 
\begin{equation*}
\begin{array}{ccl}
\Gamma ^{Z}\left( fg,h\right) & = & f\Gamma ^{Z}\left( g,h\right) +g\Gamma
^{Z}\left( f,h\right) .%
\end{array}%
\end{equation*}

(iv) 
\begin{equation*}
\begin{array}{ccl}
\Gamma _{2}^{Z}\left( f,g\right) & = & \frac{1}{2}\left[ \Delta _{b}\Gamma
^{Z}\left( f,g\right) -\Gamma ^{Z}\left( f,\Delta _{b}g\right) -\Gamma
^{Z}\left( g,\Delta _{b}f\right) \right] .%
\end{array}%
\end{equation*}
Here we denote $\Gamma \left( f\right) =\Gamma \left( f,f\right) ,\ \Gamma
_{2}\left( f\right) =\Gamma _{2}\left( f,f\right) ,\ \Gamma ^{Z}\left(
f\right) =\Gamma ^{Z}\left( f,f\right) $ and $\Gamma _{2}^{Z}\left( f\right)
=\Gamma _{2}^{Z}\left( f,f\right) .$ Note that in a complete strictly
pseudoconvex CR $(2n+1)$-manifold with vanishing torsion. One can have $%
\Gamma \left( f,f\right) =\left( \nabla _{b}f,\nabla _{b}f\right) $ and $%
\Gamma _{2}\left( f\right) =\| \nabla _{b}^{2}f\|^{2}+Ric\left( \nabla
_{b}f,\nabla _{b}f\right) +\frac{n}{2}\| \nabla _{T}\nabla _{b}f\|^{2}$ and $%
\Gamma^{Z}\left( f,g\right) =\left( \nabla _{T}f,\nabla _{T}g\right)$.

\begin{de}
We say that $(M,J,\theta )$ satisfies the generalized curvature-dimension
inequality $CD\left( \rho_{1},\rho_{2},\kappa ,d\right) $ with respect to $%
\Delta_{b}$ if there exist constants $\rho_{1}$ a real number, $\rho_{2}>0 $%
, $\kappa \geq 0$, and $d\geq 2$ such that the inequality 
\begin{equation*}
\Gamma_{2}( f) +\nu \Gamma_{2}^{Z}( f) \geq \frac{1}{ d}( \Delta_{b}f)^{2}+(
\rho _{1}-\frac{\kappa }{\nu } ) \Gamma ( f) +\rho _{2}\Gamma^{Z}( f)
\end{equation*}
holds for every $f\in C^{\infty }( M) $ and every $\nu>0.$
\end{de}

We define 
\begin{equation}
D:=d\left( 1+\frac{3\kappa }{2\rho _{2}}\right)  \label{30001}
\end{equation}%
and 
\begin{equation*}
\rho _{1}^{-}=\max \left( -\rho _{1},0\right) .
\end{equation*}

\begin{lem}
\label{l31} (i) (\cite[Theorem 4]{m}) Let $(M,J,\theta )$ be a complete
strictly pseudoconvex CR $(2n+1)$-manifold of vanishing torsion with 
\begin{equation*}
Ric\geq \rho _{1}.
\end{equation*}%
Then $M$ satisfies the generalized curvature-dimension inequality $CD\left(
\rho _{1},\frac{n}{2},1,2n\right) $ with $\rho _{2}=\frac{n}{2},\kappa =1$
and $d=2n$. Moreover for any given $R_{0}>0,$ there exists a constant $%
C\left( d,\kappa ,\rho _{2}\right) >0$ such that 
\begin{equation*}
\mu \left( B\left( x,R\right) \right) \leq C\left( d,\kappa ,\rho
_{2}\right) \frac{\exp \left( 2d\rho _{1}^{-}R_{0}^{2}\right) }{%
R_{0}^{D}p\left( x,x,R_{0}^{2}\right) }R^{D}\exp \left( 2d\rho
_{1}^{-}R^{2}\right)
\end{equation*}%
for every $x\in M$ and $R\geq R_{0}$. In particular if $M$ is a complete
strictly pseudoconvex CR $(2n+1)$-manifold of nonnegative Ricci curvature
and vanishing torsion, then there exists a constant $C_{1}>0$ such that 
\begin{equation}
\mu \left( B\left( x,R\right) \right) \leq \frac{C_{1}}{R_{0}^{D}p\left(
x,x,R_{0}^{2}\right) }R^{D}  \label{3-1}
\end{equation}%
for $R\geq R_{0}.$

(ii) (\cite{bg}) Let $(M,J,\theta )$ be a complete strictly pseudoconvex CR $%
(2n+1)$-manifold of nonnegative Ricci curvature and vanishing torsion. Then,
for any $\varepsilon >0,$ there exists a constant $C_{3}(d,\rho _{2},\kappa
, $ $\varepsilon )>0$ such that%
\begin{equation}
p\left( x,y,t\right) \leq \frac{C\left( d,\rho _{2},\kappa ,\varepsilon
\right) }{\mu \left( B\left( x,\sqrt{t}\right) \right) ^{\frac{1}{2}}\mu
\left( B\left( y,\sqrt{t}\right) \right) ^{\frac{1}{2}}}\exp \left( -\frac{%
d^{2}\left( x,y\right) }{\left( 4+\varepsilon \right) t}\right) .
\label{2006}
\end{equation}

(iii) (\cite{bbgm}) Let $(M,J,\theta )$ be a complete strictly pseudoconvex
CR $(2n+1)$-manifold of nonnegative Ricci curvature and vanishing torsion.
Then there exists a constant $C_{2}>0$ such that%
\begin{equation}
p\left( x,x,2R^{2}\right) \geq \frac{C_{2}}{\mu \left( B\left( x,R\right)
\right) }.  \label{3-2}
\end{equation}
\end{lem}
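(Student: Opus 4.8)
The plan is to assemble the three parts from the sub-Riemannian heat-kernel theory of the cited works, after checking that our hypotheses place us inside the scope of the generalized curvature-dimension inequality $CD(\rho_1,\tfrac{n}{2},1,2n)$. The only part requiring genuine input on our side is the verification of this inequality; the volume and heat-kernel bounds are then formal outputs of the general machinery.

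For part (i), I would establish the $CD$ inequality directly from the Bochner-type identities recorded above. Using $\Gamma_2(f)=\|\nabla_b^2 f\|^2+Ric(\nabla_b f,\nabla_b f)+\tfrac{n}{2}\|\nabla_T\nabla_b f\|^2$ together with $\Gamma^Z(f)=\|\nabla_T f\|^2$, one invokes the elementary trace (Cauchy--Schwarz) inequality $\|\nabla_b^2 f\|^2\geq \tfrac{1}{2n}(\Delta_b f)^2$, which produces the dimension constant $d=2n$, and the hypothesis $Ric\geq\rho_1$, which gives $Ric(\nabla_b f,\nabla_b f)\geq \rho_1\,\Gamma(f)$. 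The remaining, genuinely sub-Riemannian, step is a completion-of-squares estimate controlling the interaction between the horizontal Hessian and the vertical second derivatives: one combines the term $\tfrac{n}{2}\|\nabla_T\nabla_b f\|^2$ with the auxiliary form $\nu\,\Gamma_2^Z(f)$ so that, for every $\nu>0$, the sum $\Gamma_2(f)+\nu\,\Gamma_2^Z(f)$ dominates $\tfrac{1}{2n}(\Delta_b f)^2+(\rho_1-\tfrac{1}{\nu})\Gamma(f)+\tfrac{n}{2}\Gamma^Z(f)$. This is precisely the content of \cite{m} with $\rho_2=\tfrac{n}{2}$, $\kappa=1$, $d=2n$, and I would cite that computation rather than reproduce it. Granted the $CD$ inequality, the volume estimate is the corresponding black-box output: on a complete manifold the Baudoin--Garofalo machinery yields a volume-growth bound of the stated shape, with $D=d(1+\tfrac{3\kappa}{2\rho_2})$ as in (\ref{30001}) and the factor $\exp(2d\rho_1^- R^2)$ recording the possibly negative Ricci lower bound. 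Specializing to $Ric\geq 0$ forces $\rho_1=0$, hence $\rho_1^-=0$, so the exponentials collapse to $1$ and (\ref{3-1}) follows with $C_1$ absorbing $C(d,\kappa,\rho_2)$ and the normalization $R_0^D p(x,x,R_0^2)$.

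Parts (ii) and (iii) then require no further work: (ii) is the Li--Yau-type Gaussian upper bound of \cite{bg} and (iii) the matching on-diagonal lower bound of \cite{bbgm}, both of which hold verbatim once $M$ is complete, has vanishing torsion, and satisfies $CD(0,\tfrac{n}{2},1,2n)$. I expect the main obstacle to be the first step, namely pinning down the $CD$ inequality with the exact constants $(\rho_1,\tfrac{n}{2},1,2n)$. The delicate point is not the horizontal Bochner term but the coupling between the characteristic direction $T$ and the horizontal Hessian: because $\Gamma_2(f)$ by itself does not control $(\Delta_b f)^2$ together with the right lower-order terms, one is forced to introduce the free parameter $\nu$ and the vertical form $\Gamma_2^Z$, and the value $\kappa=1$ emerges only after a careful completion of squares using the commutation relations of Section~2. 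Everything downstream is a formal consequence of this single inequality through the cited semigroup estimates.
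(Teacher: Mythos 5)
Your proposal is correct and matches the paper's treatment: the paper gives no proof of this lemma at all, simply attributing part (i) to \cite[Theorem 4]{m}, part (ii) to \cite{bg}, and part (iii) to \cite{bbgm}, exactly as you do. Your additional sketch of how the $CD\left(\rho_{1},\tfrac{n}{2},1,2n\right)$ inequality is verified from the Bochner identity (trace inequality for $d=2n$, Ricci lower bound for $\rho_{1}$, and the $\nu$-completion of squares yielding $\kappa=1$, $\rho_{2}=\tfrac{n}{2}$) is consistent with the cited source and goes slightly beyond what the paper records, but does not constitute a different route.
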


\begin{rem}
Let $(M,J,\theta )$ be a complete strictly pseudoconvex CR $(2n+1)$-manifold
of nonnegative Ricci curvature and vanishing torsion. (\ref{3-1}) and (\ref%
{3-2}) together imply the doubling property. That is%
\begin{equation}
\begin{array}{l}
\mu \left( B\left( x,R\right) \right) \leq \frac{C_{1}}{R_{0}^{D}p\left(
x,x,R_{0}^{2}\right) }R^{D}\leq C(\frac{R}{R_{0}})^{D}\mu \left( B\left( x,%
\frac{R_{0}}{\sqrt{2}}\right) \right) .%
\end{array}
\label{3-3}
\end{equation}%
By taking $R_{0}=\frac{R}{\sqrt{2}},$ then there exists a constant $C_{4}>0$
such that 
\begin{equation}
\begin{array}{l}
\mu \left( B\left( x,R\right) \right) \leq C_{4}\left( n,D\right) \mu \left(
B\left( x,\frac{R}{2}\right) \right) .%
\end{array}
\label{3-4}
\end{equation}
\end{rem}

Applying above Lemma \ref{l31}, we are able to prove the following moment
type estimate for those solution of form $P_{t}f$.

\begin{thm}
\label{prethm2-2} Let $(M,J,\theta )$ be a complete strictly pseudoconvex CR 
$(2n+1)$-manifold of nonnegative Ricci curvature and vanishing torsion.
Assume that $u$ is a solution of CR heat equation 
\begin{equation*}
\frac{\partial }{\partial t}u=\Delta _{b}u
\end{equation*}%
such that 
\begin{equation*}
u\left( x,t\right) =P_{t}f
\end{equation*}%
for a nonnegative bounded function $f$. Assume that for any $a>-D-2$ (where $%
D=2n+6$ is defined in \ref{30001}), we have 
\begin{equation*}
\frac{1}{V\left( r\right) }\int_{B_{x}\left( r\right) }f\left( y\right) d\mu
\left( y\right) \leq Ar^{a}
\end{equation*}%
for a constant $A>0$ and $r\geq R\geq 1$. Then there exists a constant $%
C\left( n,d\right) $ such that 
\begin{equation}
u\left( x,t\right) \leq C\left( n,d\right) At^{\frac{a}{2}}  \label{2013a}
\end{equation}%
for all $t\geq R^{2}$.
\end{thm}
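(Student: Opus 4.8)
The plan is to exploit the semigroup representation $u(x,t)=\int_M p(x,y,t)f(y)\,d\mu(y)$ together with the Gaussian heat-kernel upper bound \ref{2006}, the doubling property \ref{3-4}, and the polynomial volume growth \ref{3-3}. The first step is to replace the symmetric two-sided volume factor in \ref{2006} by a single on-diagonal factor centred at $x$. Since $B_x(\sqrt t)\subset B_y(d(x,y)+\sqrt t)$, the growth bound \ref{3-3} applied at $y$ gives $\mu(B_x(\sqrt t))\le C(1+d(x,y)/\sqrt t)^{D}\mu(B_y(\sqrt t))$, hence $\mu(B_y(\sqrt t))^{-1/2}\le C(1+d(x,y)/\sqrt t)^{D/2}\mu(B_x(\sqrt t))^{-1/2}$. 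Substituting this into \ref{2006} and absorbing the polynomial prefactor into the Gaussian (choosing $\varepsilon<1$) yields the clean bound $p(x,y,t)\le \frac{C}{\mu(B_x(\sqrt t))}\exp\!\big(-\tfrac{d^{2}(x,y)}{5t}\big)$. Making this reduction uniform in $y$ is the first technical point, and it is precisely where the doubling constant $C_{4}(n,D)$ enters.

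Next I would pass from the pointwise kernel bound to the distribution function of $f$. Writing $F(r)=\int_{B_x(r)}f\,d\mu$, the co-area (layer-cake) identity, using that $F\ge 0$ is nondecreasing and that the boundary terms vanish by Gaussian decay, gives
\[
u(x,t)\le \frac{C}{\mu(B_x(\sqrt t))}\int_M e^{-d^{2}(x,y)/5t}f(y)\,d\mu(y)=\frac{C}{\mu(B_x(\sqrt t))}\int_0^\infty F(r)\,\frac{2r}{5t}\,e^{-r^{2}/5t}\,dr .
\]
Because $t\ge R^{2}$ forces $\sqrt t\ge R\ge 1$, for $r\ge\sqrt t$ the hypothesis applies directly, $F(r)\le A r^{a}V(r)=A r^{a}\mu(B_x(r))$, and \ref{3-3} gives $\mu(B_x(r))\le C(r/\sqrt t)^{D}\mu(B_x(\sqrt t))$; for $0\le r\le\sqrt t$ I bound $F(r)\le F(R)$ by monotonicity, which produces a genuinely lower-order contribution.

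The heart of the estimate is then the rescaling $r=\sqrt t\,\rho$. Inserting the volume bounds and cancelling $\mu(B_x(\sqrt t))$ converts the radial integral into
\[
u(x,t)\le C\,A\,t^{a/2}\int_0^\infty \rho^{\,a+D+1}\,e^{-\rho^{2}/5}\,d\rho ,
\]
so the power $t^{a/2}$ appears automatically from the scaling, in accordance with the on-diagonal decay $p(x,x,t)\sim \mu(B_x(\sqrt t))^{-1}$. This last Gaussian moment converges at infinity for every exponent, while at $\rho=0$ it is finite precisely when $a+D+1>-1$, that is $a>-D-2$; thus the hypothesis on $a$ enters at this single, sharp point, and the resulting finite integral furnishes the constant $C(n,d)$.

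I expect the main obstacle to be making the volume comparison rigorous across all scales simultaneously: one must combine the doubling estimate \ref{3-3}--\ref{3-4} for $r\ge\sqrt t$ with the (a priori possibly non-maximal) growth for $r\le\sqrt t$ so that the near-diagonal part is genuinely controlled. The clean integral above corresponds to the worst case $\mu(B_x(r))\approx (r/\sqrt t)^{D}\mu(B_x(\sqrt t))$ of maximal growth; when the volume grows strictly slower than $r^{D}$ the inner contribution is even smaller, and moreover the moment hypothesis with $a$ near $-D-2$ then forces $f\equiv 0$, so the estimate is trivial. Collecting the convergent moment integral with the lower-order inner term completes the bound $u(x,t)\le C(n,d)\,A\,t^{a/2}$ for all $t\ge R^{2}$.
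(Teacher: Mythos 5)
Your argument is correct and follows essentially the same route as the paper, which establishes exactly your two reductions --- the volume-ratio bound (\ref{2009}) and the one-sided kernel bound (\ref{2008}) --- and then invokes the layer-cake and rescaling computation of Theorem 3.1 in \cite{n3} that you carry out explicitly. The only slip is that for $R\le r\le \sqrt{t}$ monotonicity gives $F(r)\le F(\sqrt{t})\le A t^{a/2}\mu\left( B_{x}\left( \sqrt{t}\right) \right)$ rather than $F(r)\le F(R)$, but this still yields an inner contribution of order $At^{a/2}$, so the conclusion stands.
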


\begin{proof}
Let $\delta =\frac{d\left( x,y\right) }{\sqrt{t}}.$ Thus 
\begin{equation}
\begin{array}{l}
B_{x}\left( \sqrt{t}\right) \subseteq B_{y}\left( \left( \delta +1\right) 
\sqrt{t}\right) .%
\end{array}
\label{2007}
\end{equation}%
It follow from (\ref{3-4}) and (\ref{2007}) that 
\begin{equation*}
\begin{array}{l}
V_{x}\left( \sqrt{t}\right) \leq V_{y}\left( \left( \delta +1\right) \sqrt{t}%
\right) \leq C\left( d,\kappa ,\rho _{2}\right) \left( \delta +1\right)
^{D}V_{y}\left( \sqrt{t}\right) .%
\end{array}%
\end{equation*}%
That is,%
\begin{equation}
\begin{array}{l}
\frac{V_{x}\left( \sqrt{t}\right) }{V_{y}\left( \sqrt{t}\right) }\leq
C\left( d,\kappa ,\rho _{2}\right) \left( \delta +1\right) ^{D}.%
\end{array}
\label{2009}
\end{equation}%
We can rewrite (\ref{2006}) as 
\begin{equation}
\begin{array}{lll}
p\left( x,y,t\right) & \leq & \frac{C\left( d,\rho _{2},\kappa ,\varepsilon
\right) }{\mu \left( B\left( x,\sqrt{t}\right) \right) ^{\frac{1}{2}}\mu
\left( B\left( y,\sqrt{t}\right) \right) ^{\frac{1}{2}}}\exp \left( -\frac{%
d^{2}\left( x,y\right) }{\left( 4+\varepsilon \right) t}\right) \\ 
& \leq & \frac{C\left( d,\rho _{2},\kappa ,\varepsilon \right) }{\mu \left(
B\left( x,\sqrt{t}\right) \right) }\left( \frac{\mu \left( B\left( x,\sqrt{t}%
\right) \right) }{\mu \left( B\left( y,\sqrt{t}\right) \right) }\right) ^{%
\frac{1}{2}}\exp \left( -\frac{d^{2}\left( x,y\right) }{\left( 4+\varepsilon
\right) t}\right) \\ 
& \leq & \frac{C\left( d,\rho _{2},\kappa ,\varepsilon \right) }{\mu \left(
B\left( x,\sqrt{t}\right) \right) }\exp \left( -\frac{d^{2}\left( x,y\right) 
}{\left( 4+\varepsilon \right) t}\right) .%
\end{array}
\label{2008}
\end{equation}%
Then, based on (\ref{2009}) and (\ref{2008}), Theorem \ref{prethm2-2}
follows from the proof of Theorem 3.1 in \cite{n3} in case of $u\left(
x,t\right) =P_{t}f$ for a nonnegative bounded function $f$. The use of
volume comparison can be replace by (\ref{3-3}).
\end{proof}

\section{CR Linear Trace Li-Yau-Hamilton Type Inequality}

In this section, we first relate the linear trace Li-Yau-Hamilton type
inequality of the CR Lichnerowicz-Laplacian heat equation to a monotonicity
formula of the heat solution. More precisely, let $\eta _{\alpha \bar{\beta}%
}\left( x,t\right) $ be a symmetric $\left( 1,1\right) $ tensor satisfying
the CR Lichnerowicz-Laplacian heat equation%
\begin{equation}
\frac{\partial }{\partial t}\eta _{\alpha \bar{\beta}}=\Delta _{b}\eta
_{\alpha \bar{\beta}}+2R_{\alpha \bar{\gamma}\mu \bar{\beta}}\eta _{\gamma 
\bar{\mu}}-(R_{\gamma \bar{\beta}}\eta _{\alpha \bar{\gamma}}+R_{\alpha \bar{%
\gamma}}\eta _{\gamma \bar{\beta}})  \label{4-1}
\end{equation}%
on $M\times \lbrack 0,T).$ As in the paper of \cite{ccf}, we define
following Harnack quantity

\begin{equation*}
Z\left( x,t\right) \left( V\right) :=k_{1}\left( \frac{1}{2}\left( \left( 
\text{div}\eta \right) _{\alpha ,\bar{\alpha}}+\left( \text{div}\eta \right)
_{\bar{\alpha},\alpha }\right) +\left( \text{div}\eta \right) _{\alpha }V_{%
\bar{\alpha}}+\left( \text{div}\eta \right) _{\alpha }V_{\bar{\alpha}}+V_{%
\bar{\alpha}}V_{\beta }\eta _{\alpha \bar{\beta}}\right) +\frac{H}{t}
\end{equation*}%
for any vector field $V\in T^{1,0}\left( M\right) ,\ H=$ $h^{\alpha 
\overline{\beta }}\eta _{\alpha \bar{\beta}}$ and $0<k_{1}\leq 8$. We proved

\begin{thm}
(\cite{ccf}) \label{thm2-1} Let $(M,J,\theta )$ be a complete strictly
pseudoconvex CR $(2n+1)$-manifold of nonnegative bisectional curvature and
vanishing torsion. Let $\eta _{\alpha \bar{\beta}}\left( x,t\right) $ be a
symmetric $\left( 1,1\right) $ tensor satisfying the CR
Lichnerowicz-Laplacian heat equation (\ref{4-1}) on $M\times \left(
0,T\right) $ with 
\begin{equation*}
\eta _{\alpha \bar{\beta}}\left( x,0\right) \geq 0,
\end{equation*}%
and%
\begin{equation*}
\nabla _{T}\eta \left( x,0\right) =0.
\end{equation*}%
Then 
\begin{equation*}
Z\left( x,t\right) \geq 0
\end{equation*}%
on $M\times \left( 0,T\right) $ for any $\left( 1,0\right) $ vector field $V$
and $0<k_{1}\leq 8$ if there exists constant $a>0$ such that%
\begin{eqnarray}
&&%
\begin{array}{l}
\int_{0}^{T}\int_{M}e^{-ar^{2}}\left \Vert \eta \left( x,t\right) \right
\Vert ^{2}d\mu dt<\infty ,%
\end{array}
\label{thm2-1-001} \\
&&%
\begin{array}{l}
\int_{0}^{T}\int_{M}e^{-ar^{2}}\left \Vert \nabla _{T}\eta \left( x,t\right)
\right \Vert ^{2}d\mu dt<\infty ,%
\end{array}
\label{thm2-1-002} \\
&&%
\begin{array}{l}
\int_{M}e^{-ar^{2}}\left \Vert \eta \left( x,0\right) \right \Vert d\mu
<\infty .%
\end{array}
\label{thm2-1-003}
\end{eqnarray}
\end{thm}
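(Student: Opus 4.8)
The plan is to prove $Z\ge 0$ by a parabolic maximum principle applied to a heat-type differential inequality for $Z$, in the spirit of Hamilton's matrix Harnack estimate and its linear-trace refinement by Ni and Tam, adapted to the subelliptic operator $\Delta_b$. Before computing, I would first record two preliminary facts. Tracing the Lichnerowicz--Laplacian heat equation (\ref{4-1}) with $h^{\alpha\bar\beta}$ and using the symmetries of the curvature tensor together with vanishing torsion, the two reaction terms cancel upon contraction, so the trace $H=h^{\alpha\bar\beta}\eta_{\alpha\bar\beta}$ solves the scalar CR heat equation $\partial_t H=\Delta_b H$; combined with Proposition \ref{prop2-2} and the integrability bound (\ref{thm2-1-003}), this gives $H\ge 0$, and more generally $\eta_{\alpha\bar\beta}\ge 0$, on $M\times(0,T)$. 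Differentiating (\ref{4-1}) and applying the commutation relations of Section 2, I would likewise derive the evolution equation satisfied by $(\text{div}\,\eta)_\alpha$; the curvature and torsion correction terms that appear are exactly the ones controlled by the hypotheses $\nabla_T\eta(\cdot,0)=0$ and vanishing torsion.

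The core of the argument is the computation of $\left(\tfrac{\partial}{\partial t}-\Delta_b\right)Z$. I would handle the free vector field $V$ by Hamilton's device: at a putative first zero of $Z$ in space-time I fix $V$ to be the minimizer of the quadratic form $V\mapsto Z(x,t)(V)$ (well defined since $\eta\ge 0$) and extend it so that $\nabla V=0$ at that point, which annihilates every term carrying a derivative of $V$. Substituting the heat equations for $H$, for $(\text{div}\,\eta)_\alpha$, and for the symmetrized second divergence into the three blocks of $Z$, I expect an inequality of the form
\begin{equation*}
\left(\tfrac{\partial}{\partial t}-\Delta_b\right)Z\;\ge\;\langle\nabla_b Z,\,W\rangle+\mathcal R,
\end{equation*}
where $W$ is a controllable drift and $\mathcal R$ gathers the curvature and quadratic contributions. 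The decisive algebraic step is to show $\mathcal R\ge 0$: nonnegative bisectional curvature renders the pure curvature terms a sum of manifestly nonnegative expressions of the form $R_{\alpha\bar\gamma\mu\bar\beta}(\cdots)(\overline{\cdots})$, while the residual first- and second-order terms assemble into squares such as $\big\|(\text{div}\,\eta)_\alpha+\eta_{\alpha\bar\beta}V_\beta\big\|^2$. The cross terms in this completion of squares close precisely when $0<k_1\le 8$, the bound arising from a Cauchy--Schwarz/discriminant estimate on the coupling between the $\text{div}\,\eta$ and $V$ contributions, where the CR factors of $2$ in $|\nabla_b f|^2=2u_\alpha u_{\bar\alpha}$ produce the numerical constant.

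To run the maximum principle, I would use the $H/t$ term: since $H\ge 0$, replacing $H$ by $H+\varepsilon$ if necessary, the quantity $Z$ tends to $+\infty$ as $t\to 0^+$, so $Z>0$ on $M\times(0,\delta)$ for small $\delta$. Propagating this to all of $(0,T)$ on the noncompact manifold is where the decay hypotheses (\ref{thm2-1-001}) and (\ref{thm2-1-002}) enter: multiplying the differential inequality by a weight $e^{-ar^2}\zeta^2$ with $\zeta$ a spatial cutoff and integrating, a Karp--Li/Ni--Tam weighted-integral argument forces the negative part of $Z$ to vanish, the finiteness of $\int_0^T\!\!\int_M e^{-ar^2}\|\eta\|^2\,d\mu\,dt$ and $\int_0^T\!\!\int_M e^{-ar^2}\|\nabla_T\eta\|^2\,d\mu\,dt$ being exactly what discards the contributions from infinity.

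The step I expect to be the main obstacle is establishing $\mathcal R\ge 0$ in the CR setting. Unlike the K\"ahler case, the CR commutation relations introduce additional terms along the characteristic direction $T$ and terms built from the torsion; one must check that, under vanishing torsion and the condition $\nabla_T\eta(\cdot,0)=0$ (propagated in $t$ by the evolution), these either vanish identically or are absorbed into the nonnegative squares. Making the completion of squares close with the sharp constant $k_1\le 8$, while simultaneously contending with the merely subelliptic nature of $\Delta_b$ rather than a genuine Laplacian, is the delicate technical heart of the proof.
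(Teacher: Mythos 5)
The paper does not prove this theorem: it is imported verbatim from \cite{ccf} (the authors' earlier paper on the linear trace Li--Yau--Hamilton inequality for the CR Lichnerowicz--Laplacian heat equation), so there is no in-text proof to compare yours against line by line. That said, your outline is the standard Ni--Tam/Ni strategy that \cite{ccf} itself follows (it is explicitly modeled on \cite{n1} and \cite{nn}): derive the evolution equations for $H$, $(\mathrm{div}\,\eta)_\alpha$ and the symmetrized second divergence, compute $(\partial_t-\Delta_b)Z$ with Hamilton's choice of the extremal vector field $V$, complete squares using nonnegative bisectional curvature to get the constraint on $k_1$, and close the argument at infinity with a weighted-integral (Karp--Li / Ni--Tam) maximum principle using (\ref{thm2-1-001})--(\ref{thm2-1-003}). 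Your preliminary observation that the reaction terms cancel under the trace (so $H$ solves the scalar heat equation) is correct and is exactly the fact the main paper relies on when it sets $u=\Lambda\phi$.

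Two caveats. First, the entire mathematical content of the theorem sits in the step you label as ``I expect an inequality of the form\dots'': the CR Bochner-type computation produces extra terms along the characteristic direction $T$ (e.g.\ from the commutators $\varphi_{\alpha\bar\beta}-\varphi_{\bar\beta\alpha}=ih_{\alpha\bar\beta}\varphi_0$ and $\sigma_{\alpha,\beta\bar\gamma}-\sigma_{\alpha,\bar\gamma\beta}=ih_{\beta\bar\gamma}\sigma_{\alpha,0}+\cdots$) which have no K\"ahler analogue, and showing they are absorbed is precisely where the hypotheses $\nabla_T\eta(\cdot,0)=0$ and (\ref{thm2-1-002}) are consumed; deferring this to an ``expected'' computation leaves the proof essentially unwritten. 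Second, your parenthetical claim that $\nabla_T\eta=0$ is ``propagated in $t$ by the evolution'' needs its own argument: under vanishing torsion $\nabla_T$ commutes with the Lichnerowicz--Laplacian heat operator, so $\nabla_T\eta$ again solves (\ref{4-1}) with zero initial data, and one must then invoke a uniqueness/Liouville statement in the weighted class given by (\ref{thm2-1-002}) (in the spirit of Lemma \ref{mp1}) to conclude $\nabla_T\eta\equiv 0$. Also note that when $\eta\geq 0$ is merely nonnegative the minimizing $V$ need not exist where $\eta$ degenerates, so the Hamilton trick requires the usual $\varepsilon$-perturbation. None of these is fatal --- the approach is the right one --- but as written the proposal is a roadmap rather than a proof.
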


Let $\phi $ be a $\left( p,q\right) $-form. Define contraction operator $%
\Lambda :\Lambda ^{p,q}\rightarrow \Lambda ^{p-1,q-1}$ as follow%
\begin{equation*}
\begin{array}{l}
\left( \Lambda \phi \right) _{\alpha _{1}...\alpha _{p-1}\bar{\beta}_{1}...%
\bar{\beta}_{q-1}}=\frac{1}{\sqrt{-1}}\left( -1\right) ^{p-1}h^{\alpha \bar{%
\beta}}\phi _{\alpha \alpha _{1}...\alpha _{p-1}\bar{\beta}\bar{\beta}_{1}...%
\bar{\beta}_{q-1}.}%
\end{array}%
\end{equation*}

Then it is a straightforward computation, we have

\begin{lem}
(\cite{cct}) \label{lem3-1} Let $(M,J,\theta )$ be a strictly pseudoconvex
CR $(2n+1)$-manifold. We have the K\"{a}hler type identities\newline
(i) 
\begin{equation*}
\left[ \partial _{b},\Lambda \right] =-\sqrt{-1}\bar{\partial}_{b}^{\ast }%
\text{ \ \ \textrm{and} }\ \text{ }\left[ \bar{\partial}_{b},\Lambda \right]
=\sqrt{-1}\partial _{b}^{\ast }.
\end{equation*}%
(ii) 
\begin{equation*}
\lbrack \bar{\partial}_{b},\square _{b}]=2iT\bar{\partial}_{b}\text{ \ \ 
\textrm{and} \ \ }[\partial _{b},\square _{b}]=0.
\end{equation*}%
(iii) 
\begin{equation*}
\left[ \bar{\partial}_{b},\Delta _{H}\right] =-iT\bar{\partial}_{b}\text{ \
\ \textrm{and} \ \ }[\Lambda ,\Delta _{H}]=0.
\end{equation*}
\end{lem}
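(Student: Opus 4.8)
The three families of identities are the CR counterparts of the classical K\"{a}hler identities of Hodge theory, and the author already signals that each is ``a straightforward computation''; accordingly the plan is to prove (i) by a direct computation in an adapted local coframe and then to read off (ii) and (iii), the one genuinely new feature being the book-keeping of the characteristic direction $T$. Throughout I would work in a local unitary coframe $\{\theta^{\alpha}\}$ with $h_{\alpha\bar\beta}=\delta_{\alpha\beta}$ and express $\partial_b,\bar\partial_b$ and their formal adjoints $\partial_b^{\ast},\bar\partial_b^{\ast}$ (taken against $d\mu=\theta\wedge(d\theta)^n$) as (anti)symmetrised covariant derivatives with respect to the pseudohermitian connection $\nabla$.

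For part (i), since $\Lambda$ is algebraic, namely contraction against $h^{\alpha\bar\beta}$, the commutators $[\partial_b,\Lambda]$ and $[\bar\partial_b,\Lambda]$ are first-order operators and I would compare them term by term with $\bar\partial_b^{\ast}$ and $\partial_b^{\ast}$. Because $\nabla h=0$, the holomorphic derivative passes through the contraction with no zeroth-order remainder, so only the combinatorial term surviving from wedging-then-contracting remains, and it is exactly $-\sqrt{-1}\,\bar\partial_b^{\ast}$; the factor $\sqrt{-1}$ is produced by the structure equation $d\theta=\sqrt{-1}\,h_{\alpha\bar\beta}\theta^{\alpha}\wedge\theta^{\bar\beta}$. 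Note that no $T$-term appears here, since in (i) one never commutes a $Z_{\alpha}$-derivative past a $Z_{\bar\beta}$-derivative; this is what makes the right-hand sides clean. Conjugating the first identity gives the second.

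For part (ii), I would compute $\bar\partial_b\square_b-\square_b\bar\partial_b$ directly from $\square_b=2(\bar\partial_b^{\ast}\bar\partial_b+\bar\partial_b\bar\partial_b^{\ast})$. Here the commutation relation $\varphi_{\alpha\bar\beta}-\varphi_{\bar\beta\alpha}=\sqrt{-1}\,h_{\alpha\bar\beta}\varphi_{0}$ recorded in Section 2 is the crucial input: whenever a $Z_{\bar\beta}$-derivative must be moved past a $Z_{\alpha}$-derivative one picks up the characteristic derivative $\varphi_{0}=T\varphi$, and after contraction against $h^{\alpha\bar\beta}$ these accumulate into precisely $2\sqrt{-1}\,T\bar\partial_b$. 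The same book-keeping applied to $\partial_b$ shows that the corresponding $T$-contributions cancel, giving $[\partial_b,\square_b]=0$; one should also carry along the curvature and torsion terms from the higher commutation relations of Section 2 and verify they drop out.

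For part (iii), I would use $\Delta_H=-\tfrac12(\square_b+\overline{\square}_b)$. Conjugating $[\partial_b,\square_b]=0$ yields $[\bar\partial_b,\overline{\square}_b]=0$, so $[\bar\partial_b,\Delta_H]=-\tfrac12[\bar\partial_b,\square_b]=-\sqrt{-1}\,T\bar\partial_b$. For $[\Lambda,\Delta_H]=0$ I would expand $\square_b$ and $\overline{\square}_b$ through $\bar\partial_b,\bar\partial_b^{\ast},\partial_b,\partial_b^{\ast}$, commute $\Lambda$ across each factor using (i), and check that the leftover $T$-terms produced by the two halves $\square_b$ and $\overline{\square}_b$ are equal and opposite and so cancel in the symmetric combination. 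I expect the main obstacle to be exactly this last cancellation, together with the bookkeeping in (ii): in the K\"{a}hler setting the anomalous $T$-terms are simply absent, whereas here one must track the contributions of $\varphi_{\alpha\bar\beta}-\varphi_{\bar\beta\alpha}=\sqrt{-1}\,h_{\alpha\bar\beta}\varphi_{0}$ and its tensorial analogues and show they assemble into the stated multiples of $T$ (and annihilate $[\Lambda,\Delta_H]$), which is where the structure constant in $d\theta$ and the curvature terms must be handled with care.
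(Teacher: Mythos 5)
The paper does not actually prove this lemma: it is imported verbatim from \cite{cct} with the remark that ``it is a straightforward computation,'' so there is no in-paper argument to compare your proposal against. Your route --- a direct computation in a local unitary coframe, expressing $\partial_b,\bar{\partial}_b,\partial_b^{\ast},\bar{\partial}_b^{\ast},\Lambda$ through covariant derivatives and invoking the commutation relations of Section 2 --- is the standard (and essentially the only) way to establish these identities, and it is what the cited source does. Your observation that in (i) the commutator of the first-order operator $\partial_b$ with the algebraic operator $\Lambda$ never forces a $Z_{\bar{\beta}}$-derivative past a $Z_{\alpha}$-derivative, so that no $T$-term can appear and the identity looks exactly K\"ahler, is correct and is the right structural explanation.

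That said, as written the proposal is an outline rather than a proof: every place where the lemma has actual content --- the coefficient $2i$ in $[\bar{\partial}_b,\square_b]=2iT\bar{\partial}_b$, the vanishing of $[\partial_b,\square_b]$, and the cancellation giving $[\Lambda,\Delta_H]=0$ --- is deferred to ``one should verify.'' Be aware that $[\partial_b,\square_b]=0$ is not merely a matter of the $ih_{\alpha\bar{\beta}}\varphi_0$ contributions cancelling: commuting $\partial_b$ past $\bar{\partial}_b^{\ast}\bar{\partial}_b+\bar{\partial}_b\bar{\partial}_b^{\ast}$ produces, through $\sigma_{\alpha,\beta\bar{\gamma}}-\sigma_{\alpha,\bar{\gamma}\beta}=ih_{\beta\bar{\gamma}}\sigma_{\alpha,0}+R_{\alpha\bar{\rho}\beta\bar{\gamma}}\sigma_{\rho}$, curvature terms as well as $T$-terms, so this identity is the CR analogue of the Bochner--Kodaira comparison of the two complex Laplacians rather than a bookkeeping triviality; moreover the lemma is asserted for an arbitrary strictly pseudoconvex CR manifold, with no vanishing-torsion hypothesis, so the $A_{\alpha\beta}$-terms appearing in the commutation relations must be shown to cancel rather than assumed away. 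The same applies to the final cancellation in $[\Lambda,\Delta_H]=0$. Carrying out these verifications is the bulk of the proof; until they are done your argument establishes the expected shape of the identities, not the identities themselves.
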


\begin{lem}
\label{lem3-2} Let $\phi $ be a nonnegative $\left( 1,1\right) $-form.
Define $Q\left( \phi ,V\right) $ as%
\begin{equation}
\begin{array}{l}
Q\left( \phi ,V,k_{2}\right) =k_{2}\left( \frac{1}{2\sqrt{-1}}\left( \bar{%
\partial}_{b}^{\ast }\partial _{b}^{\ast }-\partial _{b}^{\ast }\bar{\partial%
}_{b}^{\ast }\right) \phi +\frac{1}{\sqrt{-1}}\left( \bar{\partial}%
_{b}^{\ast }\phi \right) _{V}-\frac{1}{\sqrt{-1}}\left( \partial _{b}^{\ast
}\phi \right) _{V}+\phi _{V,\bar{V}}\right) +\frac{\Lambda \phi }{t}.%
\end{array}
\label{3002}
\end{equation}%
Then this is equivalent to 
\begin{equation*}
\begin{array}{l}
Q\left( \eta ,V,k_{2}\right) =k_{2}\left( \frac{1}{2}\left( \left( \text{div}%
\eta \right) _{\alpha ,\bar{\alpha}}+\left( \text{div}\eta \right) _{\bar{%
\alpha},\alpha }\right) +\left( \text{div}\eta \right) _{\alpha }V_{\bar{%
\alpha}}+\left( \text{div}\eta \right) _{\alpha }V_{\bar{\alpha}}+\eta
_{\alpha \bar{\beta}}V_{\alpha }V_{\bar{\beta}}\right) +\frac{H}{t}%
\end{array}%
\end{equation*}%
for a symmetric $\left( 1,1\right) $-tensor $\eta _{\alpha \bar{\beta}}:=%
\frac{1}{\sqrt{-1}}\phi _{\alpha \bar{\beta}}$. In particular, by taking $%
V=0 $, and $k_{2}=2$, we have%
\begin{equation}
\begin{array}{l}
Q\left( \phi ,V\right) =-\Delta _{H}\Lambda \phi +\left( \bar{\partial}%
_{b}^{\ast }\Lambda \bar{\partial}_{b}+conj\right) \phi +\frac{u}{t}%
\end{array}
\label{3001}
\end{equation}%
for $u=\Lambda \phi .$
\end{lem}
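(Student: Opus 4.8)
The statement has two parts: first, that the operator definition (\ref{3002}) of $Q(\phi,V,k_{2})$ coincides with the tensorial expression once one sets $\eta_{\alpha\bar\beta}=\frac{1}{\sqrt{-1}}\phi_{\alpha\bar\beta}$; and second, that the choice $V=0$, $k_{2}=2$ collapses it to the identity (\ref{3001}). The organizing remark throughout is a reality observation: since $\phi$ is a real $(1,1)$-form, the tensor $\eta_{\alpha\bar\beta}=\frac{1}{\sqrt{-1}}\phi_{\alpha\bar\beta}$ is Hermitian symmetric, so that $\overline{(\text{div}\,\eta)_{\alpha}}=(\text{div}\,\eta)_{\bar\alpha}$ and $\overline{\bar\partial_{b}^{\ast}\partial_{b}^{\ast}\phi}=\partial_{b}^{\ast}\bar\partial_{b}^{\ast}\phi$. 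This reality is exactly what reconciles the two forms of the leading term.

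For the equivalence, the plan is to expand each of the five operator pieces in a local unitary coframe. On a $(1,1)$-form, $\bar\partial_{b}^{\ast}\phi$ is a $(1,0)$-form and $\partial_{b}^{\ast}\phi$ is a $(0,1)$-form; writing out the coordinate formula for the codifferentials and substituting $\phi_{\alpha\bar\beta}=\sqrt{-1}\eta_{\alpha\bar\beta}$ identifies $\frac{1}{\sqrt{-1}}\bar\partial_{b}^{\ast}\phi$ with the divergence $(\text{div}\,\eta)_{\alpha}$ and $\frac{1}{\sqrt{-1}}\partial_{b}^{\ast}\phi$ with its conjugate $(\text{div}\,\eta)_{\bar\alpha}$. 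Applying a second codifferential then gives, up to the fixed sign convention, $\bar\partial_{b}^{\ast}\partial_{b}^{\ast}\phi=\sqrt{-1}(\text{div}\,\eta)_{\bar\alpha,\alpha}$ and, by conjugation, $\partial_{b}^{\ast}\bar\partial_{b}^{\ast}\phi=\sqrt{-1}\,\overline{(\text{div}\,\eta)_{\bar\alpha,\alpha}}$, whence $\frac{1}{2\sqrt{-1}}(\bar\partial_{b}^{\ast}\partial_{b}^{\ast}-\partial_{b}^{\ast}\bar\partial_{b}^{\ast})\phi$ becomes the real symmetric second divergence $\frac{1}{2}((\text{div}\,\eta)_{\alpha,\bar\alpha}+(\text{div}\,\eta)_{\bar\alpha,\alpha})$. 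The two contraction terms $\frac{1}{\sqrt{-1}}(\bar\partial_{b}^{\ast}\phi)_{V}$ and $-\frac{1}{\sqrt{-1}}(\partial_{b}^{\ast}\phi)_{V}$ become the divergence--vector pairings, $\phi_{V,\bar V}$ becomes $\eta_{\alpha\bar\beta}V_{\alpha}V_{\bar\beta}$, and $\Lambda\phi=H$ supplies the $H/t$ term; assembling these recovers $Q(\eta,V,k_{2})$.

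For (\ref{3001}), setting $V=0$ and $k_{2}=2$ leaves $Q(\phi,0,2)=\frac{1}{\sqrt{-1}}(\bar\partial_{b}^{\ast}\partial_{b}^{\ast}-\partial_{b}^{\ast}\bar\partial_{b}^{\ast})\phi+\frac{\Lambda\phi}{t}$, so it suffices to prove the operator identity $\frac{1}{\sqrt{-1}}(\bar\partial_{b}^{\ast}\partial_{b}^{\ast}-\partial_{b}^{\ast}\bar\partial_{b}^{\ast})\phi=-\Delta_{H}\Lambda\phi+(\bar\partial_{b}^{\ast}\Lambda\bar\partial_{b}+conj)\phi$. I would expand $\Delta_{H}=-\frac{1}{2}(\square_{b}+\overline{\square}_{b})=-(\bar\partial_{b}^{\ast}\bar\partial_{b}+\bar\partial_{b}\bar\partial_{b}^{\ast}+\partial_{b}^{\ast}\partial_{b}+\partial_{b}\partial_{b}^{\ast})$ and apply it to the function $\Lambda\phi$. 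Since $\Lambda\phi$ is a function, $\bar\partial_{b}^{\ast}\Lambda\phi=\partial_{b}^{\ast}\Lambda\phi=0$, so the $\bar\partial_{b}\bar\partial_{b}^{\ast}$ and $\partial_{b}\partial_{b}^{\ast}$ terms drop and only $\bar\partial_{b}^{\ast}\bar\partial_{b}\Lambda\phi+\partial_{b}^{\ast}\partial_{b}\Lambda\phi$ survives. Pushing $\Lambda$ past the derivatives by the K\"ahler-type identities of Lemma \ref{lem3-1}(i), namely $\bar\partial_{b}\Lambda=\Lambda\bar\partial_{b}+\sqrt{-1}\partial_{b}^{\ast}$ and $\partial_{b}\Lambda=\Lambda\partial_{b}-\sqrt{-1}\bar\partial_{b}^{\ast}$, converts this into $(\bar\partial_{b}^{\ast}\Lambda\bar\partial_{b}+\partial_{b}^{\ast}\Lambda\partial_{b})\phi$ plus precisely the combination $\sqrt{-1}(\bar\partial_{b}^{\ast}\partial_{b}^{\ast}-\partial_{b}^{\ast}\bar\partial_{b}^{\ast})\phi$; solving the resulting linear relation for $\frac{1}{\sqrt{-1}}(\bar\partial_{b}^{\ast}\partial_{b}^{\ast}-\partial_{b}^{\ast}\bar\partial_{b}^{\ast})\phi$ yields (\ref{3001}), the term $conj$ being the $\partial_{b}^{\ast}\Lambda\partial_{b}$ piece conjugate to $\bar\partial_{b}^{\ast}\Lambda\bar\partial_{b}$.

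The delicate point is entirely in the equivalence step: the naive operator difference $\bar\partial_{b}^{\ast}\partial_{b}^{\ast}-\partial_{b}^{\ast}\bar\partial_{b}^{\ast}$ looks antisymmetric in the two divergence directions, whereas the target is the symmetric second divergence, and it is exactly the factor $\sqrt{-1}$ in $\eta=\frac{1}{\sqrt{-1}}\phi$ together with the Hermitian symmetry of $\eta$ that interchanges the two; keeping the codifferential sign conventions consistent is the only real hazard. By contrast, the derivation of (\ref{3001}) is pleasantly clean: although the CR commutators carry extra $T$-direction corrections such as $[\bar\partial_{b},\Delta_{H}]=-iT\bar\partial_{b}$ recorded in Lemma \ref{lem3-1}(ii)--(iii), none of these enter here, because $\Lambda\phi$ is a function and only the first-order identities of Lemma \ref{lem3-1}(i) are invoked, so no curvature or torsion terms appear and the identity is purely formal.
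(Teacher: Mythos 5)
Your proposal is correct and follows essentially the same route as the paper: the equivalence is obtained by writing the codifferentials of the $(1,1)$-form in a local coframe and using the Hermitian reality of $\eta=\frac{1}{\sqrt{-1}}\phi$ to combine the two quadratic codifferential terms into the symmetric second divergence, and (\ref{3001}) is obtained from the K\"ahler-type identities of Lemma \ref{lem3-1}(i) together with the fact that $\bar{\partial}_{b}^{\ast}$ and $\partial_{b}^{\ast}$ annihilate the scalar $\Lambda\phi$, so that only $\bar{\partial}_{b}^{\ast}\bar{\partial}_{b}\Lambda\phi+\partial_{b}^{\ast}\partial_{b}\Lambda\phi=-\Delta_{H}\Lambda\phi$ survives. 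The only difference is cosmetic --- you expand $\Delta_{H}\Lambda\phi$ and solve for the codifferential combination, whereas the paper rewrites $\bar{\partial}_{b}^{\ast}\partial_{b}^{\ast}-\partial_{b}^{\ast}\bar{\partial}_{b}^{\ast}$ via the commutators directly --- and the sign ambiguities you flag are present in the paper's own display as well.
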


\begin{proof}
As in \cite{ccf}, we have the formula for a $\left( p,q+1\right) $-form $%
\psi $ 
\begin{equation*}
\begin{array}{l}
\left( \bar{\partial}_{b}^{\ast }\psi \right) _{\alpha _{1}...\alpha _{p}%
\bar{\beta}_{1}...\bar{\beta}_{q}}=\left( -1\right) ^{p}\frac{1}{q+1}%
\sum_{i=1}^{q+1}\left( -1\right) ^{i}\nabla _{\mu }\psi _{\alpha
_{1}...\alpha _{p}\bar{\beta}_{1}...\bar{\beta}_{i-1}\bar{\mu}\bar{\beta}%
_{i}...\bar{\beta}_{q}}%
\end{array}%
\end{equation*}%
and a $\left( p+1,q\right) $-form $\varphi $ 
\begin{equation*}
\begin{array}{l}
\left( \partial _{b}^{\ast }\varphi \right) _{\alpha _{1}...\alpha _{p}\bar{%
\beta}_{1}...\bar{\beta}_{q}}=\left( -1\right) \frac{1}{p+1}\nabla _{\bar{\mu%
}}\varphi _{\mu \alpha _{1}...\alpha _{p}\bar{\beta}_{1}...\bar{\beta}_{q}}.%
\end{array}%
\end{equation*}%
Thus for a $\left( 1,1\right) $-form $\phi ,$ we have 
\begin{equation*}
\left( \partial _{b}^{\ast }\phi \right) _{\bar{\gamma}}=-\nabla _{\bar{\mu}%
}\phi _{\mu \bar{\gamma}}
\end{equation*}%
and 
\begin{equation*}
\bar{\partial}_{b}^{\ast }\partial _{b}^{\ast }\phi =\nabla _{\gamma }\left(
\nabla _{\bar{\mu}}\phi _{\mu \bar{\gamma}}\right) .
\end{equation*}%
Then the first term of (\ref{3002}) become%
\begin{equation*}
\begin{array}{lll}
\frac{1}{2\sqrt{-1}}\left( \bar{\partial}_{b}^{\ast }\partial _{b}^{\ast
}-\partial _{b}^{\ast }\bar{\partial}_{b}^{\ast }\right) \phi & = & \frac{1}{%
2\sqrt{-1}}\bar{\partial}_{b}^{\ast }\partial _{b}^{\ast }\phi +conj. \\ 
& = & \frac{1}{2\sqrt{-1}}\nabla _{\gamma }\left( \nabla _{\bar{\mu}}\phi
_{\mu \bar{\gamma}}\right) +conj. \\ 
& = & \frac{1}{2}\left( \left( \text{div}\eta \right) _{\alpha ,\bar{\alpha}%
}+conj.\right)%
\end{array}%
\end{equation*}%
We are done. On the other hand, taking $V=0$ and $k_{2}=2$, by lemma \ref%
{lem3-1} we have%
\begin{equation*}
\begin{array}{lll}
\frac{2}{2\sqrt{-1}}\left( \bar{\partial}_{b}^{\ast }\partial _{b}^{\ast
}-\partial _{b}^{\ast }\bar{\partial}_{b}^{\ast }\right) \phi & = & \bar{%
\partial}_{b}^{\ast }[\bar{\partial}_{b},\Lambda ]\phi -\partial _{b}^{\ast }%
\left[ \partial _{b},\Lambda \right] \phi \\ 
& = & -\bar{\partial}_{b}^{\ast }\bar{\partial}_{b}\Lambda \phi -\partial
_{b}^{\ast }\partial _{b}\Lambda \phi +\bar{\partial}_{b}^{\ast }\Lambda 
\bar{\partial}_{b}\phi +\partial _{b}^{\ast }\Lambda \partial _{b}\phi \\ 
& = & -\Delta _{H}\Lambda \phi +\bar{\partial}_{b}^{\ast }\Lambda \bar{%
\partial}_{b}\phi +\partial _{b}^{\ast }\Lambda \partial _{b}\phi .%
\end{array}%
\end{equation*}

Here we use the fact that $\bar{\partial}_{b}^{\ast }f=\partial _{b}^{\ast
}f=0$ for any scalar function. Then formula (\ref{3001}) follows.
\end{proof}

\bigskip

\begin{rem}
The regularity of the heat solution in Proposition \ref{prop2-1} and the
following Lemma is used to prove the "mix-term" $\left( \bar{\partial}%
_{b}^{\ast }\Lambda \bar{\partial}_{b}+conj\right) \phi $ in (\ref{3001})
vanishing as in (\ref{4006}) and (\ref{4006b}) which is the key step in the
proof of our main theorem.
\end{rem}

\bigskip

\begin{lem}
\label{lem3-3} Let $(M,J,\theta )$ be a complete strictly pseudoconvex CR $%
(2n+1)$-manifold with nonnegative bisectional curvature and vanishing
torsion. Let $\phi $ be a solution of the CR Hodge-Laplace heat equation (%
\ref{70}). Then $\left \Vert \Lambda \bar{\partial}_{b}\phi \right \Vert $
satisfies%
\begin{equation*}
\begin{array}{ll}
\left( \frac{\partial }{\partial t}-\Delta _{b}\right) \left \Vert \Lambda 
\bar{\partial}_{b}\phi \right \Vert & \leq ||\Lambda T\bar{\partial}_{b}\phi
||.%
\end{array}%
\end{equation*}
\end{lem}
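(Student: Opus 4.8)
The statement to prove is a differential inequality for the scalar quantity $\|\Lambda\bar\partial_b\phi\|$, where $\phi$ solves the CR Hodge-Laplace heat equation \eqref{70}. My plan is to first derive the evolution equation for the $(0,1)$-form $\psi:=\Lambda\bar\partial_b\phi$ under the heat flow, and then apply the Bochner technique (Kato-type inequality) to pass from the norm of $\psi$ to the scalar heat operator acting on $\|\psi\|$. The starting point is the commutation identity (iii) of Lemma \ref{lem3-1}, namely $[\bar\partial_b,\Delta_H]=-iT\bar\partial_b$ and $[\Lambda,\Delta_H]=0$. Since $\phi$ evolves by $\partial_t\phi=\Delta_H\phi$, I compute
\begin{equation*}
\frac{\partial}{\partial t}\psi=\Lambda\bar\partial_b\Delta_H\phi=\Lambda\Delta_H\bar\partial_b\phi+\Lambda\,(iT\bar\partial_b)\phi=\Delta_H\psi+iT\psi,
\end{equation*}
using $[\Lambda,\Delta_H]=0$ in the middle step. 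This gives a clean evolution equation for $\psi$ with a single lower-order term $iT\psi=i\Lambda T\bar\partial_b\phi$.

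Next I would convert the Hodge-Laplacian $\Delta_H$ acting on the $(0,1)$-form $\psi$ into the rough sub-Laplacian $\Delta_b$ plus curvature terms, via the CR Bochner-Weitzenb\"ock formula from \cite{ccf} (the same formula that turned \eqref{41} into \eqref{70}). With vanishing torsion the Weitzenb\"ock curvature term acting on a $(0,1)$-form is governed by the pseudohermitian Ricci curvature, which is nonnegative by hypothesis. Thus the evolution of $\psi$ has the schematic form $\partial_t\psi=\Delta_b\psi+(\text{Ric-type curvature, }\geq 0)+iT\psi$. The sign of the curvature term is exactly what will make it drop out with the correct inequality direction once I take norms.

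The third step is the Kato-type inequality: for the scalar norm $\|\psi\|$ one has the pointwise bound $\|\psi\|\,\Delta_b\|\psi\|\geq \langle \Delta_b\psi,\psi\rangle-\big|\nabla_b\|\psi\|\big|^2$, and correspondingly $\partial_t\|\psi\|=\|\psi\|^{-1}\mathrm{Re}\langle\partial_t\psi,\psi\rangle$ away from the zero set of $\psi$. Combining these, the $\Delta_b\psi$ term is controlled by $\Delta_b\|\psi\|$, the nonnegative curvature term contributes with the favorable sign and can be discarded, and the remaining first-order term $iT\psi$ yields, after Cauchy-Schwarz, the bound $\|iT\psi\|=\|\Lambda T\bar\partial_b\phi\|$ on the right-hand side. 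This produces exactly
\begin{equation*}
\Big(\frac{\partial}{\partial t}-\Delta_b\Big)\|\Lambda\bar\partial_b\phi\|\leq\|\Lambda T\bar\partial_b\phi\|.
\end{equation*}
I expect the main obstacle to be bookkeeping rather than conceptual: I must verify that the torsion-dependent correction terms in the CR commutation relations and in the Bochner formula genuinely vanish under the vanishing-torsion hypothesis (so that no extra uncontrolled terms survive), and I must handle the regularity/degeneracy of $\|\psi\|$ at points where $\psi=0$, where $\|\psi\|$ is only Lipschitz. The latter is standard and is resolved either by working with $\sqrt{\|\psi\|^2+\varepsilon^2}$ and letting $\varepsilon\to 0$, or by interpreting the inequality in the barrier/distributional sense, which suffices for the maximum-principle applications in the sequel.
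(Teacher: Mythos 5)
Your proposal is correct and follows essentially the same route as the paper's proof: derive the evolution equation $(\partial_t-\Delta_H)\Lambda\bar\partial_b\phi = i\Lambda T\bar\partial_b\phi$ from the commutators in Lemma \ref{lem3-1}(iii), convert $\Delta_H$ on the resulting $(0,1)$-form to $\Delta_b$ plus a Ricci term via the Bochner--Weitzenb\"ock formula of \cite{ccf}, discard the Ricci term by nonnegativity, and bound the remaining $iT$ term by Cauchy--Schwarz after a Kato-type inequality. Your explicit attention to the zero set of $\Lambda\bar\partial_b\phi$ (where the norm is only Lipschitz) is a point the paper passes over silently, but otherwise the arguments coincide.
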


\begin{proof}
We have the formula for a $\left( p,q\right) $-form $\psi $%
\begin{equation*}
\begin{array}{l}
\left( \bar{\partial}_{b}\psi \right) _{\alpha _{1}...\alpha _{p}\bar{\beta}%
_{1}...\bar{\beta}_{q+1}}=\left( -1\right) ^{p}\sum_{i=1}^{q+1}\left(
-1\right) ^{i-1}\nabla _{\bar{\beta}_{i}}\psi _{\alpha _{1}...\alpha _{p}%
\bar{\beta}_{1}...\bar{\beta}_{i-1}\bar{\beta}_{i+1}...\bar{\beta}_{q+1}}.%
\end{array}%
\end{equation*}%
So that 
\begin{equation*}
\left( \bar{\partial}_{b}\phi \right) _{\alpha \bar{\beta}\bar{\gamma}%
}=-\nabla _{\bar{\beta}}\phi _{\alpha \bar{\gamma}}+\nabla _{\bar{\gamma}%
}\phi _{\alpha \bar{\beta}}
\end{equation*}%
and 
\begin{equation*}
\begin{array}{lll}
\left( \Lambda \bar{\partial}_{b}\phi \right) _{\bar{\gamma}} & = & 
ih^{\alpha \bar{\beta}}\nabla _{\bar{\beta}}\phi _{\alpha \bar{\gamma}%
}-ih^{\alpha \bar{\beta}}\nabla _{\bar{\gamma}}\phi _{\alpha \bar{\beta}} \\ 
& = & h^{\alpha \bar{\beta}}\nabla _{\bar{\beta}}\eta _{\alpha \bar{\gamma}%
}-h^{\alpha \bar{\beta}}\nabla _{\bar{\gamma}}\eta _{\alpha \bar{\beta}} \\ 
& = & \left( \text{div}\eta \right) _{\bar{\gamma}}-\nabla _{\bar{\gamma}}u.%
\end{array}%
\end{equation*}

Note that $\Lambda \bar{\partial}_{b}\phi $ satisfies the CR Hodge Laplace
heat equation, i.e., 
\begin{equation*}
\begin{array}{lll}
\left( \frac{\partial }{\partial t}+\Delta _{H}\right) \Lambda \bar{\partial}%
_{b}\phi & = & -\Lambda \bar{\partial}_{b}\Delta _{H}\phi +\Delta
_{H}\Lambda \bar{\partial}_{b}\phi \\ 
& = & -\Lambda \Delta _{H}\bar{\partial}_{b}\phi +\Delta _{H}\Lambda \bar{%
\partial}_{b}\phi +i\Lambda T\bar{\partial}_{b}\phi \\ 
& = & [\Delta _{H},\Lambda ]\bar{\partial}_{b}\phi +i\Lambda T\bar{\partial}%
_{b}\phi \\ 
& = & i\Lambda T\bar{\partial}_{b}\phi .%
\end{array}%
\end{equation*}%
Hence we have%
\begin{equation*}
\begin{array}{ll}
\left( \frac{\partial }{\partial t}-\Delta _{b}\right) \sqrt{\left \Vert
\Lambda \bar{\partial}_{b}\phi \right \Vert ^{2}} & =\frac{\Lambda \bar{%
\partial}_{b}\phi }{\left \Vert \Lambda \bar{\partial}_{b}\phi \right \Vert }%
\cdot \left( -\Delta _{H}\Lambda \bar{\partial}_{b}\phi -\Delta _{b}\Lambda 
\bar{\partial}_{b}\phi \right) +\frac{\Lambda \bar{\partial}_{b}\phi }{\left
\Vert \Lambda \bar{\partial}_{b}\phi \right \Vert }\cdot i\Lambda T\bar{%
\partial}_{b}\phi \\ 
& =-\frac{1}{\left \Vert \Lambda \bar{\partial}_{b}\phi \right \Vert }%
R_{\alpha \bar{\beta}}\left( \Lambda \bar{\partial}_{b}\phi \right) _{\alpha
}\overline{\left( \Lambda \bar{\partial}_{b}\phi \right) }_{\bar{\beta}}+%
\frac{\Lambda \bar{\partial}_{b}\phi }{\left \Vert \Lambda \bar{\partial}%
_{b}\phi \right \Vert }\cdot i\Lambda T\bar{\partial}_{b}\phi%
\end{array}%
\end{equation*}%
where in second line we use formula (3.1) of \cite{ccf} for $\left(
1,0\right) $-form $\Lambda \bar{\partial}_{b}\phi $.
\end{proof}

\bigskip

Before going any further for the proof of our main theorem, we need two more
lemmas.

\begin{lem}
(\cite{nt2}) \label{lem2-2} Let $f\geq 0$ be a function on a complete
noncompact Riemannian manifold $M^{m}$ with%
\begin{equation*}
R_{ij}\geq -\left( m-1\right) K
\end{equation*}%
for some $K\geq 0.$ Let 
\begin{equation*}
u\left( x,t\right) :=\int_{M}H\left( x,y,t\right) f\left( y\right) dy.
\end{equation*}%
Assume that $u$ is defined on $M\times \lbrack 0,T]$ for some $T>0$ and that
for $0<t\leq T,$%
\begin{equation}
\lim_{r\longrightarrow \infty }\exp \left( -\frac{r^{2}}{20t}\right)
\int_{B_{o}\left( r\right) }f=0.  \label{44}
\end{equation}%
and $p\geq 1,$%
\begin{equation*}
\begin{array}{ll}
& \frac{1}{V_{o}\left( r\right) }\int_{B_{o}\left( r\right) }u^{p}dx \\ 
\leq & C_{m,p}\left[ \frac{1}{V_{o}\left( 4r\right) }\int_{B_{o}\left(
4r\right) }f^{p}dx+\left( C_{2}\left( K,t\right) \int_{4r}^{\infty }\left( 
\frac{s}{\sqrt{t}}+\frac{s^{2}}{t}\right) \exp \left( -\frac{s^{2}}{40t}%
\right) \frac{1}{V_{o}\left( s\right) }\int_{B_{o}\left( s\right) }fd\left( 
\frac{s^{2}}{t}\right) \right) ^{p}\right]%
\end{array}%
\end{equation*}%
where $C_{2}\left( K,t\right) =C_{m}te^{C_{m}Kt}$ and $C_{m}$ is constant
only depend on dimension $M.$
\end{lem}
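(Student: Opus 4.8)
The plan is to split the representation $u(x,t)=\int_M H(x,y,t)f(y)\,dy$ at the ball $B_o(4r)$ and to estimate the resulting near and far pieces separately, the only inputs being the Li--Yau Gaussian bounds for the heat kernel of a manifold with $R_{ij}\geq-(m-1)K$ and the Bishop--Gromov volume comparison. Thus I write $u=u_1+u_2$ with
\[
u_1(x,t)=\int_{B_o(4r)}H(x,y,t)f(y)\,dy,\qquad u_2(x,t)=\int_{M\setminus B_o(4r)}H(x,y,t)f(y)\,dy .
\]
Since a lower Ricci bound makes $M$ stochastically complete, $\int_M H(x,y,t)\,dy=1$, a fact that drives both estimates.

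For the near part I would use that $H(x,\cdot,t)$ is a sub-probability density on $B_o(4r)$, so that for $p\geq1$ Jensen's inequality gives $u_1(x,t)^p\leq\int_{B_o(4r)}H(x,y,t)f^p(y)\,dy$. Integrating in $x$ over $B_o(r)$, exchanging the order of integration, and using $\int_{B_o(r)}H(x,y,t)\,dx\leq1$ (heat-kernel symmetry and stochastic completeness) yields $\int_{B_o(r)}u_1^p\,dx\leq\int_{B_o(4r)}f^p\,dy$. Dividing by $V_o(r)$ and invoking volume doubling $V_o(4r)\leq C_m V_o(r)$ produces the first term $C_{m,p}\,V_o(4r)^{-1}\int_{B_o(4r)}f^p\,dy$ of the asserted bound.

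The far part is the substance of the proof. For $x\in B_o(r)$ and $d(o,y)\geq4r$ the triangle inequality gives $d(x,y)\geq d(o,y)-d(o,x)\geq\tfrac34 d(o,y)$, so feeding this into the Li--Yau upper bound $H(x,y,t)\leq C_m V_y(\sqrt t)^{-1}\exp(-d^2(x,y)/(5t)+C_m Kt)$ bounds $u_2(x,t)$ by an $x$-independent quantity
\[
u_2(x,t)\leq C_m e^{C_m Kt}\int_{M\setminus B_o(4r)}\frac{1}{V_y(\sqrt t)}\exp\!\Big(-\frac{d^2(o,y)}{20t}\Big)f(y)\,dy .
\]
I would then pass to the radial variable $s=d(o,y)$ via the coarea formula, writing the integral against $d\big(\int_{B_o(s)}f\big)$, and integrate by parts in $s$; the hypothesis $\lim_{r\to\infty}\exp(-r^2/(20t))\int_{B_o(r)}f=0$ kills the boundary term at infinity and guarantees convergence. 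Converting the local volume factor $V_y(\sqrt t)$ into $V_o(s)$ by Bishop--Gromov costs a volume ratio that is absorbed by one half of the Gaussian after the splitting $\exp(-s^2/(20t))=\exp(-s^2/(40t))\exp(-s^2/(40t))$ and leaves the polynomial weight $(\tfrac{s}{\sqrt t}+\tfrac{s^2}{t})$; what survives is exactly $C_2(K,t)\int_{4r}^{\infty}(\tfrac{s}{\sqrt t}+\tfrac{s^2}{t})\exp(-s^2/(40t))\,V_o(s)^{-1}\big(\int_{B_o(s)}f\big)\,d(s^2/t)$ with $C_2(K,t)=C_m t e^{C_m Kt}$. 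Since this bound is independent of $x$, averaging $u_2^p$ over $B_o(r)$ reproduces the second term, and the elementary inequality $(a+b)^p\leq2^{p-1}(a^p+b^p)$ combines the two pieces.

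The step I expect to be the main obstacle is the far-part integration by parts together with the volume bookkeeping: one must track the ratios $V_y(\sqrt t)$, $V_o(\sqrt t)$ and $V_o(s)$ through Bishop--Gromov carefully enough that the Gaussian weight exactly dominates the volume growth, producing the normalized average $V_o(s)^{-1}\int_{B_o(s)}f$ rather than an unnormalized integral, and so that the curvature enters only through the factor $C_2(K,t)=C_m t e^{C_m Kt}$ and not through the leading constant $C_{m,p}$.
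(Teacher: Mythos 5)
The paper offers no proof of this lemma at all---it is imported verbatim from Ni--Tam \cite{nt2}---and your outline reproduces exactly the argument of that source: split $u=u_1+u_2$ at $B_o(4r)$, control the near part by Jensen's inequality together with $\int_M H(x,y,t)\,dy=1$, heat-kernel symmetry and volume comparison, and control the far part by the Li--Yau Gaussian upper bound, the triangle inequality $d(x,y)\ge \tfrac34 d(o,y)$, Bishop--Gromov to trade $V_y(\sqrt t)$ for $V_o(s)$ at the cost of the polynomial weight and the factor $e^{C_mKt}$, and integration by parts in $s=d(o,y)$ with the hypothesis (\ref{44}) annihilating the boundary term. This is essentially the same approach, and I see no gap beyond the minor caveat (already present in the lemma as stated) that the doubling ratio $V_o(4r)/V_o(r)$ is uniform in $r$ only when $K=0$.
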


\begin{lem}
(\cite{li}) \label{mp1} Let $(M,J,\theta )$ be a complete strictly
pseudoconvex CR $(2n+1)$-manifold and $f\left( x,t\right) $ be the
subsolution of the heat equation satisfying%
\begin{equation*}
\begin{array}{l}
\left( \frac{\partial }{\partial t}-\Delta _{b}\right) f\left( x,t\right)
\leq 0\text{ on }M\times \lbrack 0,T)%
\end{array}%
\end{equation*}%
with $f\left( x,0\right) \leq 0$ on $M$. Then $f\left( x,t\right) \leq 0$
for all $t<T$ if there exists $a>0$ such that 
\begin{equation*}
\begin{array}{l}
\int_{0}^{T}\int_{M}f^{2}\left( x,t\right) e^{-ar^{2}}d\mu \left( x\right)
dt<\infty .%
\end{array}%
\end{equation*}
\end{lem}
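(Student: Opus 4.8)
The plan is to adapt the weighted-energy (Phragm\'en--Lindel\"of) argument of Karp--Li to the subelliptic setting. The only genuinely CR-specific ingredients are integration by parts for $\Delta_b$ (equivalently, $\int_M f\Delta_b g\,d\mu=-\int_M\Gamma(f,g)\,d\mu$ for compactly supported data) and the eikonal inequality $\Gamma(r,r)=|\nabla_b r|^2\le 1$ almost everywhere, where $r(x)=d_c(o,x)$; the latter is the standard replacement for $|\nabla r|\le 1$ and, crucially, lets us avoid any second-order comparison for the merely Lipschitz function $r$.

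First I would replace $f$ by its positive part $g=\max(f,0)\ge 0$. Since $(\partial_t-\Delta_b)f\le 0$, a Kato-type inequality shows that $g$ is a weak subsolution, so that $(\partial_t-\Delta_b)\tfrac12 g^2\le-|\nabla_b g|^2$ in the distributional sense; moreover $f(\cdot,0)\le 0$ gives $g(\cdot,0)\equiv 0$, and it suffices to prove $g\equiv 0$. Next, introduce the moving weight $\rho(x,t)=-r^2(x)/D(t)$ with $D(t)=\beta-ct$, where $c$ is a fixed constant (coming from the Young inequality below) and $0<\beta<1/a$; on $[0,\beta/c)$ one has $0<D(t)\le\beta<1/a$, hence $e^{\rho}\le e^{-ar^2}$, so the growth hypothesis $\int_0^T\!\int_M f^2e^{-ar^2}\,d\mu\,dt<\infty$ makes every weighted integral below finite.

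For a Lipschitz cutoff $\phi_R$ equal to $1$ on $B_o(R)$, vanishing off $B_o(2R)$ with $|\nabla_b\phi_R|\le C/R$, set $F_R(t)=\int_M g^2\phi_R^2 e^{\rho}\,d\mu$. Differentiating in $t$, using the subsolution inequality, integrating by parts, and absorbing the cross terms $\int g\phi_R^2 e^{\rho}\langle\nabla_b g,\nabla_b\rho\rangle$ and $\int g\phi_R e^{\rho}\langle\nabla_b g,\nabla_b\phi_R\rangle$ into the gradient energy by Cauchy--Schwarz and Young, one obtains
\[
F_R'(t)\le\int_M g^2\phi_R^2 e^{\rho}\big(|\nabla_b\rho|^2+\rho_t\big)\,d\mu+\frac{C}{R^2}\int_{B_o(2R)\setminus B_o(R)}g^2 e^{\rho}\,d\mu.
\]
The eikonal bound gives $|\nabla_b\rho|^2=4r^2|\nabla_b r|^2/D^2\le 4r^2/D^2$, while $\rho_t=r^2D'/D^2$, so choosing $c$ large enough that $4+D'\le 0$ makes the first integrand nonpositive. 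Integrating from $0$ to $\tau<\beta/c$ and using $F_R(0)=0$ yields $F_R(\tau)\le CR^{-2}\int_0^T\!\int_M f^2e^{-ar^2}\,d\mu\,dt$. Letting $R\to\infty$, monotone convergence gives $\int_M g^2 e^{-r^2/D(\tau)}\,d\mu=0$, hence $g(\cdot,\tau)\equiv 0$ for all $\tau\in[0,\beta/c)$. Finally, since $\beta/c$ is a fixed step independent of the starting time, restarting from $f(\cdot,\beta/c)\le 0$ and iterating covers all of $[0,T)$.

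The main obstacle will be the handling of the Carnot--Carath\'eodory distance: it is only Lipschitz and $\Delta_b r$ is not controlled pointwise, so the scheme must be arranged to use solely the a.e. bound $|\nabla_b r|\le 1$, which is exactly what the moving weight $\rho=-r^2/D(t)$ achieves. The remaining technical points---the Kato inequality for $g=\max(f,0)$, the differentiation of $F_R$ under the integral, and the justification of integration by parts on the noncompact, subelliptic $M$---are routine once the cutoff $\phi_R$ and the finiteness furnished by the growth hypothesis are in place.
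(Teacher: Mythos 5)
Your proposal is correct and coincides with the argument the paper intends: the authors give no proof of Lemma \ref{mp1} beyond citing P.~Li's book, and the weighted-energy (Karp--Li) scheme you describe --- positive part $g=\max(f,0)$, moving Gaussian weight $e^{-r^{2}/D(t)}$ with $D(t)=\beta-ct$, cutoffs, and absorption of cross terms so that only the a.e.\ eikonal bound $|\nabla_{b}r|\le 1$ for the Carnot--Carath\'eodory distance is needed --- is exactly that cited proof, correctly transplanted to the subelliptic setting. The constants work out ($c\ge 4$, $\beta<1/a$), and the iteration in time steps of fixed length legitimately covers $[0,T)$.
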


\section{Proof of CR Optimal Gap Theorem}

In this section, by using the CR moment type estimate (Theorem \ref%
{prethm2-2}) and the linear trace LYH inequality (Theorem \ref{thm2-1}), we
are able to prove the CR optimal gap theorem.

\textbf{Proof of the main theorem:}

\begin{proof}
Here is the main idea : In the following we first use proposition \ref%
{prop2-1} to construct $\eta _{\mu }$ on exhaustion domain $\Omega _{\mu }$.
Schauder estimates provide the convergence of $\eta _{\mu }$ (\textbf{Step 1}%
) to a unique solution $\eta $. Define $u:=tr_{h}\eta $ and $u$ is a
solution of sublaplacian heat equation with initial condition $S\left(
y\right) $. By uniqueness theorem (\cite{d}) of the nonnegative heat
solution we have $u^{\left( i\right) }\rightarrow u$. Now this allows us in
one hand using trace linear Harnack estimate on $tr_{h}\eta $ to obtain
monotonicity formula 
\begin{equation}
\left( tu\right) _{t}\geq 0  \label{2013}
\end{equation}%
which apply to every nonnegative heat solution and on the other hand using
moment type estimate (which only apply to heat solution with $P_{t}f$ type
and $f$ is bounded) on $u^{\left( i\right) }:=P_{t}\rho ^{\left( i\right) }S$
to obtain that 
\begin{equation*}
u^{\left( i\right) }=o\left( t^{-1}\right) .
\end{equation*}%
Hence as well as $u$. Combing these results, the initial condition are
forced to be zero and the gap theorem holds.

Note that we derived the monotonicity property (\ref{2013}) by lemma \ref%
{lem3-2}, \ref{lem3-3}, and the vanishing of mixed term in LYH quantity (\ref%
{3001}). The condition (\ref{2001}) is applied while we use Theorem \ref%
{prethm2-2} for $a=-2$ to obtain%
\begin{equation*}
u=o\left( t^{-1}\right) .
\end{equation*}

Now we split the detail proof into two steps :

\ \ (i) \textbf{Step 1 : Convergence of }$\eta _{\mu }^{\left( i\right) }$%
\textbf{: }\ Let $\Omega _{\mu }$ be an sweetsop exhaustion domains, $\rho
^{\left( i\right) }$ be a cut-off function support in $B\left( 2R_{i}\right) 
$ such that $0\leq \rho ^{\left( i\right) }\leq 1$, $\rho ^{\left( i\right)
}=1$ in $B\left( R_{i}\right) ,$ $\left \Vert \nabla _{b}^{m_{1}}\nabla
_{T}^{m_{2}}\rho ^{\left( i\right) }\right \Vert \leq $ $\frac{C}{R_{i}}$
for $m_{1},m_{2}=0,1,2,\ m_{1}+m_{2}\geq 1$ and some constant $C$. Note that
for each $i$, there exists $N_{i}$ such that for $\mu \geq N_{i},$ $B\left(
2R_{i}\right) \subset \Omega _{\mu }$. Let $\eta _{\mu }^{\left( i\right) }$
be the solution as in Proposition \ref{prop2-1} on $\Omega _{\mu }$ for any $%
\mu \geq N_{i}$ with initial condition $\rho ^{\left( i\right) }Ric$. Now we
define 
\begin{equation*}
\begin{array}{l}
u^{\left( i\right) }\left( x,t\right) :=\int_{M}p\left( x,y,t\right) \rho
^{\left( i\right) }S\left( y\right) d\mu \left( y\right) \text{.}%
\end{array}%
\end{equation*}%
Then $u^{\left( i\right) }\left( x,t\right) $ satisfies 
\begin{equation}
\begin{array}{l}
\frac{\partial }{\partial t}u^{\left( i\right) }\left( x,t\right) -\Delta
_{\varepsilon }u^{\left( i\right) }\left( x,t\right) =-\varepsilon
^{2}u_{00}^{\left( i\right) }\left( x,t\right) ,%
\end{array}
\label{4010}
\end{equation}%
where $\Delta _{\varepsilon }=\Delta _{b}+\varepsilon ^{2}T^{2}$ is
Riemannian Laplacian with respect to the adapted metric $h_{\varepsilon
}:=h+\varepsilon ^{-2}\theta ^{2}$. Moreover, proposition \ref{prop2-2}
imply $\eta _{\mu }^{\left( i\right) }\left( x,t\right) $ is nonnegative and 
\begin{equation}
\begin{array}{l}
\left \Vert \eta _{\mu }^{\left( i\right) }\left( x,t\right) \right \Vert
\leq tr_{h}\eta _{\mu }^{\left( i\right) }\left( x,t\right) \leq u^{\left(
i\right) }\left( x,t\right) ,%
\end{array}
\label{4017}
\end{equation}%
for all $\mu \geq N_{i}$. Now we estimate $u_{00}^{\left( i\right) }\left(
x,t\right) $ first. Since $u^{\left( i\right) }\left( x,t\right) $ is a
solution of sub-Laplacian heat equation, we have 
\begin{equation*}
\begin{array}{l}
\frac{\partial }{\partial t}u_{00}^{\left( i\right) }\left( x,t\right)
-\Delta _{b}u_{00}^{\left( i\right) }\left( x,t\right) =0%
\end{array}%
\end{equation*}%
due to vanishing torsion. We define $l^{\left( i\right) }\left( x,t\right)
=\left \vert u_{00}^{\left( i\right) }\left( x,t\right) \right \vert $, and
observe that it is a subsolution of heat equation with initial condition
satisfying the followings 
\begin{equation*}
\begin{array}{ll}
& \left \vert l^{\left( i\right) }\left( x,t\right) \left( x,0\right) \right
\vert \\ 
= & \left \vert \nabla _{T}\nabla _{T}\rho ^{\left( i\right) }S\left(
y\right) \right \vert \\ 
\leq & \frac{C}{R_{i}}\chi _{B_{2R_{i}\backslash R_{i}}}S\left( y\right) ,%
\end{array}%
\end{equation*}%
where $\chi _{B_{2R_{i}\backslash R_{i}}}\left( y\right) $ is a function
with $1$ in annulus $B\left( 2R_{i}\right) \backslash B\left( R_{i}\right) $
and zero elsewhere. By maximum principle $l^{\left( i\right) }\left(
x,t\right) $ is controlled by a sub-Laplacian heat solution.

Next we define 
\begin{equation*}
g\left( x,t\right) :=\int_{M}p\left( x,y,t\right) \frac{C}{R_{i}}\chi
_{B_{2R_{i}\backslash R_{i}}}S\left( y\right) dy.
\end{equation*}
By moment type estimate 
\begin{equation}
\begin{array}{l}
g\left( x,t\right) =\frac{1}{R_{i}}o\left( t^{-1}\right) ,%
\end{array}
\label{4013}
\end{equation}%
where the particular coefficient in $o\left( t^{-1}\right) $ does not depend
on $i$. To summarize, we have 
\begin{equation}
\begin{array}{l}
\left \vert u_{00}^{\left( i\right) }\left( x,t\right) \right \vert
=l^{\left( i\right) }\left( x,t\right) \leq g\left( x,t\right) =\frac{1}{%
R_{i}^{2}}o\left( t^{-1}\right) .%
\end{array}
\label{4011}
\end{equation}

We return to equation (\ref{4010}). Now we restricted on $B\left( r\right)
\times \left[ \epsilon ,T\right] $ and try to obtain estimate not depend on
index $i$. Now we define 
\begin{equation*}
\begin{array}{l}
L^{\left( i\right) }\left( x,t\right) =u^{\left( i\right) }\left( x,t\right)
+\varepsilon ^{2}e^{T-t}\sup \limits_{B\left( r\right) \times \left[
\epsilon ,T\right] }g\left( x,t\right)%
\end{array}%
\end{equation*}%
so that $L^{\left( i\right) }\left( x,t\right) $ satisfy%
\begin{equation}
\begin{array}{l}
\frac{\partial }{\partial t}L^{\left( i\right) }\left( x,t\right) -\Delta
_{\varepsilon }L^{\left( i\right) }\left( x,t\right) \leq 0.%
\end{array}
\label{4002}
\end{equation}

Applying mean value theorem (Theorem 1.2 in \cite{lt}) to function $%
L^{\left( i\right) }\left( x,t\right) ,$ we have%
\begin{equation*}
\begin{array}{lll}
\sup \limits_{B_{\varepsilon }\left( \left( 1-\delta \right) r\right) \times
\lbrack \epsilon ,T]}L^{\left( i\right) } & \leq & C_{16}\left \{ \frac{1}{%
\left( \delta r\right) ^{2n+3}}\frac{V_{\varepsilon }\left( \frac{2}{%
\varepsilon ^{2}},2r\right) }{V_{\varepsilon }\left( r\right) }\left( r\frac{%
\sqrt{2}}{\varepsilon }\coth \left( r\frac{\sqrt{2}}{\varepsilon }\right)
+1\right) \exp \left( C_{17}\frac{2}{\varepsilon ^{2}}T\right) \right \} \\ 
&  & \times \int_{\epsilon }^{T}ds\int_{B_{\varepsilon }\left( r\right)
}L^{\left( i\right) }\left( y,s\right) d\mu _{\varepsilon }\left( y\right)
+\left( 1+\varepsilon _{1}\right) \sup \limits_{B_{\varepsilon }\left(
r\right) }L^{\left( i\right) }\left( \cdot ,\epsilon \right) .%
\end{array}%
\end{equation*}%
Let $B_{\varepsilon }\left( r\right) ,d\mu _{\varepsilon }\left( y\right) $
denote the ball with radius $r$ and volume element which is respected to
metric $h_{\varepsilon }$. The above inequality also means%
\begin{equation}
\begin{array}{lll}
\sup \limits_{B_{\varepsilon }\left( \left( 1-\delta \right) r\right) \times
\lbrack \epsilon ,T]}u^{\left( i\right) } & \leq & C_{16}\left \{ \frac{1}{%
\left( \delta r\right) ^{2n+3}}\frac{V_{\varepsilon }\left( \frac{2}{%
\varepsilon ^{2}},2r\right) }{V_{\varepsilon }\left( r\right) }\left( r\frac{%
\sqrt{2}}{\varepsilon }\coth \left( r\frac{\sqrt{2}}{\varepsilon }\right)
+1\right) \exp \left( C_{17}\frac{2}{\varepsilon ^{2}}T\right) \right \} \\ 
&  & \times \int_{0}^{T}ds\int_{B_{\varepsilon }\left( r\right) }L^{\left(
i\right) }\left( y,s\right) d\mu _{\varepsilon }\left( y\right) \\ 
&  & +\left( 1+\varepsilon _{1}\right) \sup \limits_{B_{\varepsilon }\left(
r\right) }u^{\left( i\right) }\left( \cdot ,\epsilon \right) +\left(
1+\varepsilon _{1}\right) \varepsilon ^{2}e^{T-\epsilon }\sup
\limits_{B_{\varepsilon }\left( r\right) }g\left( x,\epsilon \right) .%
\end{array}
\label{40014}
\end{equation}

We only need to estimate the first term of (\ref{40014}) below, since the
other terms are bounded. We define 
\begin{equation*}
\begin{array}{l}
L_{\varepsilon }^{\left( i\right) }\left( y,s\right)
:=\int_{M}H_{\varepsilon }\left( x,y,t\right) \left \Vert \rho ^{\left(
i\right) }S\right \Vert \left( y\right) d\mu _{\varepsilon }\left( y\right)%
\end{array}%
\end{equation*}%
and again we have 
\begin{equation}
\begin{array}{l}
L^{\left( i\right) }\left( y,s\right) \leq L_{\varepsilon }^{\left( i\right)
}\left( y,s\right) +\varepsilon ^{2}e^{T}\sup_{B\left( r\right) \times \left[
\epsilon ,T\right] }g\left( x,t\right) \leq L_{\varepsilon }^{\left(
i\right) }\left( y,s\right) +\varepsilon ^{2}e^{T}\frac{o\left(
t^{-1}\right) }{R_{i}^{2}}.%
\end{array}
\label{40017}
\end{equation}%
Now the first term of (\ref{40014}) is estimated by using (\ref{40017}) and
Lemma \ref{lem2-2} as following%
\begin{equation}
\begin{array}{ll}
& \frac{1}{V_{o,\varepsilon }\left( r\right) }\int_{B_{o}\left( r\right)
}L_{\varepsilon }^{\left( i\right) }\left( y,t\right) d\mu _{\varepsilon
}\left( y\right) \\ 
\leq & C_{m,1}\frac{1}{V_{o,\varepsilon }\left( 4r\right) }%
\int_{B_{o,\varepsilon }\left( 4r\right) }\left \Vert \rho ^{\left( i\right)
}S\right \Vert \left( y\right) d\mu _{\varepsilon }\left( y\right) \\ 
+ & C_{m}e^{C_{m}\frac{1}{\varepsilon ^{2}}t}\int_{4r}^{\infty }\left( \frac{%
s}{\sqrt{t}}+\frac{s^{2}}{t}\right) \exp \left( -\frac{s^{2}}{40t}\right) 
\frac{1}{V_{o,\varepsilon }\left( s\right) }\int_{B_{o,_{\varepsilon
}}\left( s\right) }\left \Vert \rho ^{\left( i\right) }S\right \Vert \left(
y\right) d\mu _{\varepsilon }\left( y\right) d\left( \frac{s^{2}}{t}\right) .%
\end{array}
\label{4014}
\end{equation}

The integral $\int_{B_{o,\varepsilon }\left( 4r\right) }\left \Vert \rho
^{\left( i\right) }S\right \Vert \left( y\right) d\mu _{\varepsilon }\left(
y\right) $ inside both terms in (\ref{4014}) are estimated by assumption (%
\ref{2001}) and is controlled by quantity that not depend on $i$. Hence (\ref%
{40014}) and (\ref{4014}) imply 
\begin{equation}
\begin{array}{l}
\sup \limits_{B_{\varepsilon }\left( \left( 1-\delta \right) r\right) \times
\lbrack \epsilon ,T]}u^{\left( i\right) }\leq C\left( \varepsilon
,r,T,n,\rho ^{\left( i\right) }S\right)%
\end{array}
\label{40019}
\end{equation}

and 
\begin{equation}
\begin{array}{l}
\max_{B_{\varepsilon }\left( r\right) \times \lbrack \epsilon ,T]}tr_{h}\eta
_{\mu }^{\left( i\right) }\left( x,t\right) \leq C\left( \varepsilon
,r,T,n,\rho ^{\left( i\right) }S\right) \text{.}%
\end{array}
\label{40018}
\end{equation}%
Now the interior Schauder estimate can be applied to extract a convergent
subsequence $\eta _{\mu _{k}}^{\left( i\right) }\rightarrow \eta ^{\left(
i\right) }$ that satisfies the CR Lichnerowiz-subLaplacian heat equation on $%
[0,T]$. Note that $tr_{h}\eta ^{\left( i\right) }\left( x,0\right)
=u^{\left( i\right) }\left( x,0\right) $, and by uniqueness of bounded
sub-Laplacian heat solution (from lemma \ref{mp1}) we actually have 
\begin{equation*}
tr_{h}\eta ^{\left( i\right) }\left( x,t\right) =u^{\left( i\right) }\left(
x,t\right) .
\end{equation*}

By (\ref{4010}), (\ref{4011}), (\ref{40019}) and Schauder estimates, there
is a subsequence $u^{\left( i_{j}\right) }\rightarrow u$ and $\eta ^{\left(
i_{j}\right) }\rightarrow \eta $ in any fixed compact subset with an
arbitrary chosen H\"{o}lder norm (by choosing $\beta _{0}$ large for
sweetsop domain, see appendix). Note in (\ref{4011}) as $i$ goes to infinity
we can conclude $\nabla _{T}\nabla _{T}u\left( x,t\right) =0$ and similarly $%
\nabla _{T}u\left( x,t\right) =0$ and $\nabla _{T}\eta \left( x,t\right) =0$
by using that $\left \Vert \eta _{0}^{\left( i\right) }\right \Vert $ is a
subsolution of sub-Laplacian heat equation as follows%
\begin{equation*}
\begin{array}{l}
\left( \frac{\partial }{\partial t}-\Delta _{b}\right) \left \Vert \eta
_{0}^{\left( i\right) }\right \Vert =\frac{1}{\left \Vert \eta _{0}^{\left(
i\right) }\right \Vert }\left( 2R_{\alpha \bar{\gamma}\mu \bar{\beta}}\eta
_{0\gamma \bar{\mu}}-(R_{\gamma \bar{\beta}}\eta _{0\alpha \bar{\gamma}%
}+R_{\alpha \bar{\gamma}}\eta _{0\gamma \bar{\beta}})\eta _{0\zeta \bar{\xi}%
}h_{\beta \bar{\zeta}}h_{\xi \bar{\alpha}}\right) \leq 0.%
\end{array}%
\end{equation*}%
Here we use the facts that bisectional curvature is nonnegative and
vanishing torsion. Moreover, requirement for applying maximum principle is
garanteed by similar argument as (\ref{4017}), we have 
\begin{equation}
\begin{array}{l}
\left \Vert \eta _{0}^{\left( i\right) }\right \Vert \left( x,t\right) \leq
C\int p\left( x,y,t\right) \left \vert \nabla \rho \right \vert S\left(
y\right) d\mu \left( y\right) \leq \frac{C}{R_{i}}o\left( t^{-1}\right)%
\end{array}%
.  \label{4016}
\end{equation}%
As $i$ goes to infinity, $\eta _{0}=0$. However, by now we do not know yet
through the subsequence the two functions $tr_{h}\eta \left( x,t\right) $
and $u\left( x,t\right) $ are the same even they have the same initial
condition. One regards both $u\left( x,t\right) $ and $tr_{h}\eta \left(
x,t\right) $ as solutions of Laplacian heat equations associated to adapted
metric (due to $\nabla _{T}u\left( x,t\right) =\nabla _{T}tr_{h}\eta \left(
x,t\right) =0$), and the manifold are seen as Riemannian manifold with
Riemannian curvature bounded below by $-\frac{1}{\varepsilon ^{2}}$ (Theorem
4.9 in \cite{cc1}). Now by the uniqueness of nonnegative Laplacian heat
solution (\cite{d}) on complete manifold with Riemannian Ricci curvature
bounded below, we can conclude that%
\begin{equation*}
\begin{array}{l}
u\left( x,t\right) =tr_{h}\eta \left( x,t\right) .%
\end{array}%
\end{equation*}

Note that $u$ is the unique sub-Laplacian heat solution with $\nabla
_{T}u\left( x,t\right) =0$, and since any such $u$ we can find a sequence of 
$u^{\left( i\right) }$ that satisfy moment type estimates converge to $u$.
Hence $u$ satisfy the moment type estimate.

\ \ (ii) \textbf{Step 2 : Monotonicity of }$tu$ \textbf{: }By our
assumptions on $Ric$, and the upper bound of $\eta \left( x,t\right) $ by $%
u\left( x,t\right) =o\left( t^{-1}\right) $, (\ref{thm2-1-001}), \ (\ref%
{thm2-1-002}) and (\ref{thm2-1-003}) in Theorem \ref{thm2-1} are satisfied.
Hence by Lemma \ref{lem3-2} and (\ref{3001}), $tr_{h}\eta $ satisfy%
\begin{equation}
\begin{array}{l}
u_{t}+\left( \bar{\partial}_{b}^{\ast }\Lambda \bar{\partial}%
_{b}+conj\right) \phi +\frac{u}{t}\geq 0.%
\end{array}
\label{4006}
\end{equation}%
In the following we are going to prove the mixed terms $\left( \bar{\partial}%
_{b}^{\ast }\Lambda \bar{\partial}_{b}+conj\right) \phi $ of (\ref{4006})
vanishing so the monotonicity 
\begin{equation}
\begin{array}{l}
\left( tu\right) _{t}\geq 0%
\end{array}
\label{4006b}
\end{equation}%
follows. Hence 
\begin{equation*}
\begin{array}{l}
tu(x,t)\equiv 0,%
\end{array}%
\end{equation*}%
for any $x$ and $t>0$. The flatness then follows from $u(x,0)\equiv 0$.

In fact, we first define $\sigma ^{\left( i\right) }:=\Lambda \bar{\partial}%
_{b}\eta ^{\left( i\right) }$ (note $\eta ^{\left( i\right) }=\frac{1}{\sqrt{%
-1}}\phi ^{\left( i\right) }$). Then direct calculation shows that 
\begin{equation*}
\begin{array}{l}
\left( \frac{\partial }{\partial t}-\Delta _{b}\right) \left \Vert \eta
_{\mu _{k}}^{\left( i\right) }\right \Vert ^{2}\leq -2\left \Vert \nabla
\eta _{\mu _{k}}^{\left( i\right) }\right \Vert ^{2}\text{.}%
\end{array}%
\end{equation*}%
We integrate on both sides over $\Omega _{\mu _{k}}$ and apply Dirichlet
condition (using boundary regularity in Proposition \ref{prop2-1}). After
taking $\mu _{k}\rightarrow \infty $, we have 
\begin{equation}
\begin{array}{l}
2\int_{0}^{t}\int_{M}\left \Vert \nabla _{b}\eta ^{\left( i\right) }\right
\Vert ^{2}\left( x,s\right) d\mu ds\leq \int_{M}\left \Vert \eta ^{\left(
i\right) }\left( x,0\right) \right \Vert ^{2}d\mu =\int_{M}\left \Vert \rho
^{\left( i\right) }Ric\right \Vert ^{2}d\mu .%
\end{array}
\label{4012}
\end{equation}

Due to $\left \Vert \sigma ^{\left( i\right) }\right \Vert \left( x,t\right)
\leq \left \Vert \nabla _{b}\eta ^{\left( i\right) }\right \Vert \left(
x,t\right) $, (\ref{4012}) and assumption (\ref{2001}) we have for some $%
a^{\prime }>0,$ 
\begin{equation}
\begin{array}{l}
\int_{0}^{t}\int_{M}e^{-a^{\prime }r^{2}}\left \Vert \sigma ^{\left(
i\right) }\right \Vert \left( x,s\right) d\mu ds<\infty .%
\end{array}
\label{4001b}
\end{equation}

By Lemma \ref{lem3-3}, and direct calculation shows that%
\begin{equation*}
\begin{array}{lll}
\left( \frac{\partial }{\partial t}-\Delta _{b}\right) \left \Vert \sigma
^{\left( i\right) }\right \Vert \left( x,t\right) & \leq & \left \Vert
\sigma _{0}^{\left( i\right) }\right \Vert .%
\end{array}%
\end{equation*}

Since$\left \Vert \sigma _{0}^{\left( i\right) }\right \Vert \leq
\left
\Vert \nabla \eta _{0}^{\left( i\right) }\right \Vert $ and $%
\left
\Vert \eta _{0}^{\left( i\right) }\right \Vert \left( x,t\right) $
satisfy \ref{4016}, by Schauder estimates \cite{si} we have for any $\tilde{%
\varepsilon}>0$, there exists $n_{\tilde{\varepsilon}}>0$ such that$%
\left
\Vert \sigma _{0}^{\left( i\right) }\right \Vert \leq \left \Vert
\nabla \eta _{0}^{\left( i\right) }\right \Vert \leq $ $\tilde{\varepsilon}$
for any $i$ $\geq n_{\tilde{\varepsilon}}$. This shows that for any $\left(
x,t\right) \in \lbrack 0,T)$%
\begin{equation*}
\begin{array}{lll}
\left( \frac{\partial }{\partial t}-\Delta _{b}\right) \left \Vert \sigma
^{\left( i\right) }\right \Vert \left( x,t\right) +e^{T-t}\tilde{\varepsilon}
& \leq & 0.%
\end{array}%
\end{equation*}

We define $v^{\left( i\right) }\left( x,t\right) $ as follow%
\begin{equation*}
\begin{array}{l}
v^{\left( i\right) }\left( x,t\right) =\int_{M}p\left( x,y,t\right) \left
\Vert \Lambda \bar{\partial}_{b}\left( \rho ^{\left( i\right) }Ric\right)
\right \Vert \left( y\right) d\mu \left( y\right) .%
\end{array}%
\end{equation*}

Duo to (\ref{4001b}) and maximum principle we have%
\begin{equation*}
\begin{array}{l}
\left \Vert \sigma ^{\left( i\right) }\right \Vert \left( x,t\right) +e^{T-t}%
\tilde{\varepsilon}\leq v^{\left( i\right) }\left( x,t\right) +e^{T-t}\tilde{%
\varepsilon}.%
\end{array}%
\end{equation*}%
Since torsion is vanishing, it implies $\bar{\partial}_{b}Ric=0$ and by
nonnegativity of Ricci curvature it follows that 
\begin{equation}
\begin{array}{lll}
\left \Vert \Lambda \bar{\partial}_{b}\left( \rho ^{\left( i\right)
}Ric\right) \right \Vert \left( y\right) & \leq & \frac{C}{R_{i}}\chi
_{B_{2R_{i}\backslash R_{i}}}S\left( y\right) .%
\end{array}
\label{4015}
\end{equation}%
Similarly as (\ref{4013}), (\ref{4015}) gives that $v^{\left( i\right)
}\rightarrow 0$ uniformly on any compact subset as $i\rightarrow \infty $.
Since $\tilde{\varepsilon}$ is arbitrary, we have 
\begin{equation*}
\left \Vert \sigma \right \Vert \left( x,t\right) =0.
\end{equation*}
Finally, as a result we have $\left( tu\right) _{t}\geq 0$ and then $%
u=o\left( t^{-1}\right) .$ This completes the proof.
\end{proof}

\appendix

\section{ {}}

In this appendix, we construct "nice" domains to avoid the possibility of
the bad regularity for heat solutions in the case of degenerated parabolic
systems. \ In fact, we will give a proof on existence and regularity result
for $\left( 1,1\right) $-form $\phi $ of the Lichnerowicz-subLaplacian heat
equation. In the proof of main theorem, one required some regularity of the
heat solution in order to prove the mixed terms $\left( \bar{\partial}%
_{b}^{\ast }\Lambda \bar{\partial}_{b}+conj\right) \phi $ of (\ref{4006})
vanishing ( then the monotonicity follows). While we construct heat solution
on complete manifolds with exhaustion domains, we need the interior
regularity at least $C^{2,\alpha }\left( \Omega _{\mu }\right) $ and
boundary regularity as continuous function in $C\left( \bar{\Omega}_{\mu
}\right) $. This requirement are needed for Arzela Ascoli theorem and
integration by part in (\ref{4012}). In semigroup method, better regularity
of evolution equation comes from the regularity of infinitesimal generator.

We denote $C^{2,\alpha }\left( \Omega ,\Lambda ^{1,1}\right) $ as $%
C^{2,\alpha }$ sections of $\Lambda ^{1,1}$ on bounded domain $\Omega $. In
our case, it is $\Delta _{H}$ on Banach space $C^{2,\alpha }\left( \Omega
,\Lambda ^{1,1}\right) \cap C\left( \bar{\Omega},\Lambda ^{1,1}\right) $.
Note $\Delta _{H}=-\frac{1}{2}\left( \square _{b}+\bar{\square}_{b}\right) $%
. Here we denote $u$ as solution of following Dirichlet problem%
\begin{equation}
\square _{b}\phi =g  \label{601}
\end{equation}%
for $g\in C^{\infty }\left( \Omega ,\Lambda ^{1,1}\right) $. First we state
some results :

\begin{thm}[Kohn]
Let $M$ be a strictly pseudoconvex CR $(2n+1)$-manifold. If $1\leq q\leq n-1$%
, then $\left \Vert \phi \right \Vert _{\frac{1}{2}}^{2}\leq C\left[ \left(
\square _{b}\phi ,\phi \right) +\left \Vert \phi \right \Vert _{0}^{2}\right]
$ for $\phi \in C^{\infty }\left( \Lambda ^{0,q}\right) $. $\left \Vert
.\right \Vert _{s}$ stands for the $L^{2}$ Sobolev norm of order $s$.
\end{thm}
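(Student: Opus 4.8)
The plan is to prove this in two movements: first reduce it to a subelliptic $\tfrac12$-estimate for the Dirichlet energy form, and then establish that estimate by the microlocal method of Kohn and Folland--Stein. First I would observe that for compactly supported $\phi\in C^\infty(\Lambda^{0,q})$ the quantity $(\square_b\phi,\phi)$ is, by the definition of $\square_b$ on forms, a fixed positive multiple of
$$
Q_0(\phi):=\Vert\bar\partial_b\phi\Vert_0^2+\Vert\bar\partial_b^*\phi\Vert_0^2,
$$
so the asserted inequality is equivalent to the subelliptic estimate $\Vert\phi\Vert_{1/2}^2\le C\,(Q_0(\phi)+\Vert\phi\Vert_0^2)$. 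Using a partition of unity subordinate to a cover by coordinate charts, and the fact that commutators of $\bar\partial_b$ with smooth cutoffs are of order zero, it suffices to prove this for $\phi$ supported in a single chart. There I may choose a local frame $\{Z_\alpha\}$ for $T_{1,0}$ orthonormal for the Levi form, so that $h_{\alpha\bar\beta}=\delta_{\alpha\beta}$ at the centre and the structure equation $d\theta=ih_{\alpha\bar\beta}\theta^\alpha\wedge\theta^{\bar\beta}$ forces $[Z_\alpha,Z_{\bar\beta}]=-i\delta_{\alpha\beta}T$ modulo first-order horizontal terms.

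Next I would expand $\bar\partial_b\phi$ and $\bar\partial_b^*\phi$ in this frame and integrate by parts to obtain the basic identity. Writing $\phi=\tfrac1{q!}\sum_{|\bar J|=q}\phi_{\bar J}\,\theta^{\bar J}$, the computation yields, modulo terms absorbable by $\Vert\phi\Vert_0$ and a small multiple of $Q_0$,
$$
Q_0(\phi)=\sum_{\alpha,\bar J}\Vert Z_{\bar\alpha}\phi_{\bar J}\Vert_0^2
+\sum_{|\bar J|=q}\big\langle c_{\bar J}\,(iT)\phi_{\bar J},\phi_{\bar J}\big\rangle,
$$
where $c_{\bar J}$ records how the commutator term $-i\delta_{\alpha\beta}T$ (i.e. the Levi form, normalized to the identity by strict pseudoconvexity) is contracted against the index set $\bar J$. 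Thus $Q_0$ controls every antiholomorphic derivative $Z_{\bar\alpha}\phi_{\bar J}$ outright, and in addition controls a signed multiple of the missing characteristic derivative $T$; this signed $T$-term is the mechanism behind the half-derivative gain.

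The heart of the argument, and the place where the hypothesis $1\le q\le n-1$ enters, is the microlocal sign analysis of the $T$-term. I would partition phase space into three conic regions according to the frequency $\tau$ dual to $T$: an elliptic region $|\tau|\lesssim|\zeta'|$, where the horizontal symbol dominates and standard elliptic estimates gain a full derivative; a region $\tau\gg0$; and a region $\tau\ll0$. In the region $\tau\gg0$ the operator $iT$ acts with positive symbol, so the $c_{\bar J}$-contribution is favorable precisely when a suitable combination of eigenvalues survives, which holds for every $\bar J$ as soon as not all $n$ positive Levi directions are exhausted, i.e. when $q\le n-1$; symmetrically, in the region $\tau\ll0$ one needs $q\ge1$. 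In each good region the favorable pairing of the positivity of $\pm T$ with the basic identity yields control of the microlocalized half-norm (the multiplier $|\tau|^{1/2}$ applied to $\phi$), and summing the three regions gives $\Vert\phi\Vert_{1/2}^2\lesssim Q_0(\phi)+\Vert\phi\Vert_0^2$.

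Finally I would reassemble: the half-derivative control of the characteristic direction together with the antiholomorphic derivatives furnished by $Q_0$ lets one recover the holomorphic derivatives $Z_\alpha\phi$ at the cost of half a derivative by commuting, after which the full $\tfrac12$-Sobolev norm is dominated. All lower-order error terms are absorbed using the interpolation inequality $\Vert\phi\Vert_{s-1}\le\epsilon\Vert\phi\Vert_s+C_\epsilon\Vert\phi\Vert_0$ together with the smallness of the commutator contributions. The main obstacle is exactly the sign bookkeeping in the microlocal step: one must verify that for $1\le q\le n-1$ both noncompact conic regions admit a strictly favorable eigenvalue combination (this is Kohn's condition $Y(q)$, and the endpoints $q=0$, $q=n$ fail precisely because one region then carries the wrong sign), and that the degenerate region contributes no loss. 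Ensuring that the error terms are genuinely of lower order, rather than of the same size as the quantities being estimated, is the other delicate point.
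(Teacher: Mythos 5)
The paper does not prove this statement at all: it is quoted verbatim as a known theorem of Kohn in the appendix, with the proof delegated to the literature (Folland--Stein, Jerison), so there is no internal argument to compare yours against. Your outline is precisely the canonical proof of Kohn's subelliptic $\tfrac12$-estimate --- reduction to the energy form $Q_0(\phi)=\Vert\bar\partial_b\phi\Vert_0^2+\Vert\bar\partial_b^*\phi\Vert_0^2$ (equal to $\tfrac12(\square_b\phi,\phi)$ in this paper's normalization $\square_b=2(\bar\partial_b^*\bar\partial_b+\bar\partial_b\bar\partial_b^*)$), the basic identity driven by $[Z_\alpha,Z_{\bar\beta}]=-ih_{\alpha\bar\beta}T+(\text{horizontal})$, and the three-region microlocal decomposition in the frequency dual to $T$, with condition $Y(q)$ reducing to $1\le q\le n-1$ in the strictly pseudoconvex case --- and the architecture is correct. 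Two remarks. First, as stated the inequality tacitly requires $M$ compact (or $\phi$ compactly supported); otherwise the global Sobolev norms and the integrations by parts in the basic identity are not justified, and indeed the paper only invokes the estimate on bounded domains. Second, the sign bookkeeping you defer is where the content lives: the basic identity comes in two dual forms, one expressing $Q_0$ through the $Z_{\bar\alpha}$-derivatives with $T$-coefficient indexed by $\bar J$, the other (obtained by one further integration by parts) through the $Z_\alpha$-derivatives with the complementary coefficient, and one must deploy one form in the conic region $\tau\gg0$ and the other in $\tau\ll0$; writing down only one identity, as your sketch does, leaves one of the two noncompact regions with an unfavorable sign, which is exactly why the endpoint cases $q=0$ and $q=n$ fail.
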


\begin{rem}
1. From the hypothesis in above theorem it requires $n\geq 2$. When $n=1$,
one refers to \cite{J1}.

2. Even though the operator $\square _{b}$ is not $\Delta _{H}$, in \cite{J2}
(see p.146) they actually prove the case for $\alpha =0$. Moreover, we have $%
\Delta _{H}=\mathcal{L}_{\alpha }$ with $\alpha =0$ up to lower order terms.
Here $\mathcal{L}_{\alpha }=-\Delta _{b}+i\alpha T.$
\end{rem}

\begin{def}
\textrm{We say that a point $x$ in the boundary $\partial \Omega $
of a domain is a characteristic point if $\xi $ is tangent to
$\partial \Omega $ at $x.$}
\end{def}

The following is the interior and boundary regularity result by Jerison \cite%
{J1}.

\begin{thm}
\label{apx2}Let $U$ be the open subset of $M$ containing no characteristic
points of $\partial \Omega $. If $\psi ,\varphi \in C_{0}^{\infty }\left(
U\right) $, $\psi =1$ in the neighborhood of the support of $\varphi $, and $%
u$ satisfies (\ref{601}) with $\psi g\in \Gamma _{\beta }\left( \bar{\Omega}%
,\Lambda ^{0,q}\right) $, then $\varphi \phi \in \Gamma _{\beta +2}\left( 
\bar{\Omega},\Lambda ^{0,q}\right) $ and 
\begin{equation*}
\left \Vert \varphi \phi \right \Vert _{\Gamma _{\beta +2}}\leq c\left(
\left \Vert \psi g\right \Vert _{\Gamma _{\beta }}+\left \Vert \psi \phi
\right \Vert _{L^{2}}\right) .
\end{equation*}
\end{thm}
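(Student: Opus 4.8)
The plan is to read this as a statement of subelliptic regularity for the Kohn--Rossi Laplacian $\square_b$ up to the non-characteristic part of $\partial\Omega$, with regularity measured in the Folland--Stein non-isotropic H\"{o}lder spaces $\Gamma_\beta$. The governing principle is that $\square_b$ is a second-order subelliptic operator whose natural gain in the $\Gamma$-scale is exactly two; excluding characteristic points is essential, because precisely there the contact distribution $\xi$ becomes tangent to $\partial\Omega$, the subelliptic structure degenerates, and this two-unit gain fails. First I would localize and reduce to a model: in a neighborhood $U$ meeting $\partial\Omega$ only in non-characteristic points, $\xi$ is transverse to the boundary, so after a contact-adapted change of coordinates one models $\Omega$ by a half-space region in the Heisenberg group $\mathbb{H}^n$, with $\square_b$ approximated to leading order by the model Kohn Laplacian.

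For the interior contribution, namely the part of $\varphi\phi$ supported away from $\partial\Omega$, I would invoke a parametrix for $\square_b$ in the non-isotropic pseudodifferential calculus adapted to the Heisenberg structure. Convolution against the Folland--Stein fundamental solution gains two units on the $\Gamma_\beta$ scale, yielding $\varphi\phi\in\Gamma_{\beta+2}$ together with the quantitative bound, while the parametrix error is smoothing and is absorbed into the $\|\psi\phi\|_{L^2}$ term. The cutoff $\psi$, equal to $1$ near $\supp\varphi$, localizes the argument: every commutator $[\square_b,\varphi]$ is of lower order in the non-isotropic scale, and its contribution is controlled by $\|\psi\phi\|_{L^2}$ with the help of Kohn's basic $L^2$ subelliptic estimate stated above, which supplies the initial half-derivative gain needed to start the bootstrap.

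The genuinely hard part will be the boundary regularity, since near $\partial\Omega$ one cannot reflect as in the elliptic case, because $\square_b$ does not commute with the natural reflection. Instead I would solve the model Dirichlet problem on the Heisenberg half-space explicitly, producing a Poisson-type operator, and then prove sharp estimates on its kernel showing that it carries boundary data into $\Gamma_{\beta+2}$ uniformly up to the boundary. The non-characteristic hypothesis is decisive here: it guarantees that the tangential contact vector fields, together with the equation $\square_b\phi=g$, recover the transverse derivative that the distribution misses, so that the boundary value problem is coercive in the subelliptic sense. Transferring these model estimates to the variable-coefficient operator by freezing coefficients, and patching the interior and boundary pieces through $\psi$ and $\varphi$, then yields the asserted bound. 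The main obstacle is the sharp kernel control for the Poisson operator near the boundary, where any loss would destroy the precise passage from $\Gamma_\beta$ to $\Gamma_{\beta+2}$.
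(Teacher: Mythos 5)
The paper does not prove this statement at all: it is quoted verbatim as a known result of Jerison (\cite{J1}, see also \cite{J2}), so there is no internal argument to compare yours against. Your outline does reconstruct the broad architecture of Jerison's actual proof --- localization to a non-characteristic boundary portion where $\xi$ is transverse to $\partial\Omega$, reduction to an osculating Heisenberg model via Folland--Stein coordinates, a non-isotropic parametrix giving the interior gain of two units on the $\Gamma_\beta$ scale, and a Poisson-type operator for the model Dirichlet problem whose kernel estimates carry the boundary regularity. In that sense the approach is the correct one and correctly identifies why the non-characteristic hypothesis is indispensable.

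However, as written your text is a programme, not a proof. Every step you name as ``the genuinely hard part'' is in fact the entire content of the theorem, and none of it is carried out: you do not construct the Poisson-type operator, do not state (let alone prove) the sharp non-isotropic kernel bounds that yield the passage from $\Gamma_\beta$ to $\Gamma_{\beta+2}$ uniformly up to the boundary, and do not supply the commutator calculus in the $\Gamma_\beta$ scale needed to make the cutoff argument and the bootstrap from Kohn's $L^2$ subelliptic estimate rigorous. There is also a point you elide: $\square_b$ here acts on $(0,q)$-forms with $1\leq q\leq n-1$ (this range is exactly where Kohn's estimate applies), so the parametrix and boundary-value analysis must be done for a system, not a scalar equation, and the freezing-of-coefficients step must control the curvature and torsion terms that distinguish $\square_b$ on $M$ from the model operator on $\mathbb{H}^n$. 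If you intend this as a self-contained proof you must supply these estimates; otherwise the honest course --- and the one the paper itself takes --- is to cite Jerison's theorems directly.
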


When an isolated characteristic boundary point occurs, Jerison proved the
regularity result when the neighborhood have strictly convexity property.
The convexity is defined by Folland-Stein local coordinates $\Theta \left(
p,-\right) :U\rightarrow \mathrm{H}^{n}$, and the boundary near point $p$ is
corresponding to graph $\tilde{t}=\sum \alpha _{i}\tilde{x}_{i}^{2}+\beta
_{j}\tilde{y}_{j}^{2}+e\left( \tilde{x},\tilde{y}\right) $, where $e\left( 
\tilde{x},\tilde{y}\right) =O\left( \left \vert \tilde{x}\right \vert
^{3}+\left \vert \tilde{y}\right \vert ^{3}\right) $. Strictly convex means $%
\alpha _{i},\beta _{j}>0$ (see eq. (7.4) and A.3 in \cite{J2}). In the
following we state the theorem in the form we want. Reader who is confused
can refer to theorem 7.6, Proposition 7.11, and Corollary 10.2 in \cite{J2}.

\begin{thm}
\label{Ap1} Let $p$ be an isolated characteristic point on $\partial \Omega $
and in some neighborhood $U_{p}$ of $p$ the geometry $U_{p}\cap \Omega $ is
like the domain $\left \{ \left( x,y,t\right) :M_{c}\left( \left \vert
x\right \vert ^{2}+\left \vert y\right \vert ^{2}\right) <t\right \} $ in
the Heisenberg group, where $M_{c}$ a positive number . Then $\varphi \phi
\in \Gamma _{\beta +2}\left( \bar{\Omega},\Lambda ^{0,q}\right) $, where the
best $\beta $ depends on $M_{c}$. Moreover, as $M_{c}\nearrow \infty $, one
can choose $\beta \nearrow \infty $.
\end{thm}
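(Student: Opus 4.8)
The plan is to reduce the statement to Jerison's microlocal analysis of $\square_b$ at an isolated characteristic boundary point and then to track how the resulting regularity exponent depends on the opening parameter $M_c$. First I would fix the Folland--Stein coordinate chart $\Theta(p,-):U_p\to\mathrm{H}^n$ centered at $p$, so that after this normalization the defining graph of $\partial\Omega$ near $p$ takes the form $\tilde t=\sum\alpha_i\tilde x_i^2+\beta_j\tilde y_j^2+e(\tilde x,\tilde y)$ with $e=O(|\tilde x|^3+|\tilde y|^3)$. The hypothesis that $U_p\cap\Omega$ looks like $\{(x,y,t):M_c(|x|^2+|y|^2)<t\}$ says precisely that all shape coefficients coincide, $\alpha_i=\beta_j=M_c>0$, which is exactly the strict convexity condition of \cite{J2} (eq.\ (7.4) and A.3). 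This is the only geometric input needed to enter Jerison's framework, and the dilation invariance of the cone $\{M_c(|x|^2+|y|^2)<t\}$ under the Heisenberg dilations $\delta_r(x,y,t)=(rx,ry,r^2t)$ guarantees it is a genuine homogeneous model at the vertex.

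Next I would use the interior-and-noncharacteristic estimate of Theorem \ref{apx2} to localize: away from $p$ the relevant part of $\partial\Omega$ contains no characteristic points, so $\varphi\phi$ already lies in $\Gamma_{\beta+2}(\bar\Omega,\Lambda^{0,q})$ with the stated bound, and the entire obstruction concentrates at $p$. At the vertex one replaces $\square_b$ by its dilation-invariant model operator on $\mathrm{H}^n$ associated with the paraboloid $\{M_c(|x|^2+|y|^2)<t\}$. The regularity of $\phi$ near $p$ is then governed by the spectral/indicial data of this model Dirichlet problem: Theorem 7.6 and Proposition 7.11 of \cite{J2} yield $\varphi\phi\in\Gamma_{\beta+2}(\bar\Omega,\Lambda^{0,q})$ for a $\beta$ determined by the smallest positive indicial root, and Corollary 10.2 repackages this into the clean statement quoted here. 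The ``$+2$'' is the usual gain of two subelliptic Folland--Stein derivatives from the second-order operator $\square_b$, but at the characteristic point this gain is capped by the indicial exponent.

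Finally, for the monotone dependence I would track how the indicial roots move as $M_c$ varies. As $M_c\nearrow\infty$ the admissible cone $\{M_c(|x|^2+|y|^2)<t\}$ narrows toward the positive $t$-axis; exactly as for the Dirichlet Laplacian on a Euclidean cone, a thinner cross-section produces a larger first eigenvalue and hence a larger homogeneity exponent for the leading model solution. Thus the characteristic singularity weakens and the best admissible $\beta$ in $\Gamma_{\beta+2}$ increases, giving $\beta\nearrow\infty$.

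The hard part will be this last step, that is, making rigorous the unbounded growth of the best $\beta$ with $M_c$. This amounts to a quantitative monotonicity of the indicial spectrum of the Heisenberg model operator on the paraboloidal domain as a function of its opening, which is precisely the content encoded in \cite{J2} Corollary 10.2. In a fully self-contained treatment I would either cite that corollary directly or reprove the eigenvalue monotonicity by separation of variables in the Heisenberg dilation/polar coordinates adapted to $M_c$, showing that the cross-sectional eigenvalue diverges as the cone degenerates to the $t$-axis.
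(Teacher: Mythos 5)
Your proposal follows essentially the same route as the paper: the paper states this theorem without an independent proof, presenting it as a repackaging of Jerison's results (Theorem 7.6, Proposition 7.11 and Corollary 10.2 of \cite{J2}), and your reduction via the noncharacteristic estimate of Theorem \ref{apx2} plus the dilation-invariant model at the vertex, with the best $\beta$ governed by the indicial data of the paraboloid $\left\{ M_{c}\left( \left\vert x\right\vert ^{2}+\left\vert y\right\vert ^{2}\right) <t\right\}$, is precisely the content of those citations. The one point to keep in mind is that the final monotonicity $\beta \nearrow \infty$ as $M_{c}\nearrow \infty$ is, in the paper exactly as in your sketch, taken from \cite{J2} rather than reproved.
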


\begin{rem}
In Theorem \ref{Ap1}, one required $g\in \Gamma _{\beta }\left( \bar{\Omega}%
,\Lambda ^{0,q}\right) $ for $\beta >2$. Moreover, $\beta $ has upper bound $%
\beta _{0}-2$ , where $\beta _{0}$ is an index related to the geometry of
the boundary. In \cite{J2}, they proved $M_{c}\nearrow \infty $, then $\beta
_{0}\nearrow \infty $.
\end{rem}

In order to construct a $C^{2,\alpha }$ Lichnerowitz-subLaplacian heat
solution, we need the exhaustion domain which satisfy the property above. In
the following we prove that it is possible by perturbing the boundary of
exhaustion domain.

\begin{thm}
\label{nice} For any given positive number $M_{c}$, there exists exhaustion
domains $\Omega _{\mu }$ such that $\partial \Omega _{\mu }$ consist only
isolated characteristic points with property as in Theorem \ref{Ap1} with
given $M_{c}$.
\end{thm}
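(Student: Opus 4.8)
The plan is to build the $\Omega_\mu$ by starting from a standard smooth exhaustion, reducing the characteristic condition to a transversality problem so that the characteristic set is isolated, and then performing a local ``bumping'' surgery near each characteristic point to install the required strictly convex model with the prescribed constant $M_c$. First I would fix a proper smooth exhaustion function $r\colon M\to[0,\infty)$, for instance a smoothing of the Carnot--Carath\'eodory distance $d_c(o,\cdot)$, which is proper by completeness. By Sard's theorem I choose an increasing sequence of regular values $c_\mu\to\infty$, so that each level set $\{r=c_\mu\}$ is a smooth compact hypersurface bounding the relatively compact domain $\Omega_\mu:=\{r<c_\mu\}$ with $\Omega_\mu\nearrow M$. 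The key reformulation is that a point $x$ of a level set is characteristic, i.e. $\xi_x\subset T_x\{r=c_\mu\}$, exactly when $\nabla_b r(x)=0$; at such a point $dr(x)=r_0(x)\,\theta$ with $r_0=Tr$, and regularity of the level forces $r_0(x)\neq 0$, so the conormal is a nonzero multiple of $\theta$.

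Next I would make the characteristic points isolated. The locus $\Sigma:=\{\nabla_b r=0\}$ is cut out by the $2n$ scalar equations $Z_\alpha r=0$ in the $(2n+1)$-manifold $M$, so after a generic $C^\infty$-small perturbation of $r$ (parametric transversality) $\Sigma$ becomes a smooth embedded $1$-dimensional submanifold meeting each regular level $\{r=c_\mu\}$ transversally. Hence $\{r=c_\mu\}\cap\Sigma$ is finite, i.e. the characteristic points of each $\partial\Omega_\mu$ are isolated.

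The heart of the argument is the local surgery that prescribes $M_c$. Around an isolated characteristic point $p$ I introduce Folland--Stein coordinates $(x,y,t)$ centred at $p$, with $T=\partial_t$ and $\theta$ osculating the standard Heisenberg contact form. Solving $r=c_\mu$ for $t$ and using $\nabla_b r(p)=0$, the boundary is a graph $t=Q(x,y)+O(3)$ whose quadratic part is the horizontal Hessian normalized by the Reeb derivative, $Q=-\tfrac{1}{2r_0(p)}(\nabla^H)^2 r(p)$; the domain is modeled on $\{M_c(|x|^2+|y|^2)<t\}$ precisely when $Q$ is definite with $|Q|\ge M_c$. I then replace, inside a small FS-ball $B(p,\delta)$, the original graph by the paraboloid $t=\pm M_c'(|x|^2+|y|^2)$ with $M_c'>M_c$ and sign matching the inward $T$-codirection, interpolating back to the original boundary on the annulus $\delta<|(x,y,t)|_{FS}<2\delta$ by a cutoff. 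Choosing $\delta$ small and the interpolation monotone in $t$ keeps $\nabla_b r\neq 0$ throughout the transition annulus, so no new characteristic points are created, and installs exactly one characteristic point at $p$ with convexity $\ge M_c$. Since the inserted paraboloid's coefficient is chosen by hand, $M_c$ is prescribed arbitrarily; enlarging it only sharpens the bump (this is the ``sweetsop'' shape), consistent with the Remark following Theorem \ref{Ap1} that larger $M_c$ improves $\beta_0$.

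The hard part is the global bookkeeping of characteristic-point types. The signs $\alpha_i,\beta_j$ (equivalently the co-orientation of each convex bump) cannot all be prescribed freely, because the characteristic line field on the closed hypersurface $\partial\Omega_\mu$ satisfies a Poincar\'e--Hopf constraint with index sum $\chi(\partial\Omega_\mu)$, and a hyperbolic (indefinite $Q$) point cannot simply be deleted. What rescues the construction is that the regularity needed here is for the real symmetric operator $\Delta_H=-\tfrac12(\square_b+\overline{\square}_b)$, which is of type $\mathcal{L}_0=-\Delta_b$ up to lower-order terms; its hypoelliptic estimates at an isolated characteristic point are insensitive to the co-orientation, in contrast to the one-sided $\square_b$ on $(0,q)$-forms. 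Hence every strictly convex (definite $Q$) point of either co-orientation is admissible and only the indefinite type must be avoided. I would therefore use the transversality perturbation to push hyperbolic points off the chosen levels, or to replace each by index-consistent pairs of convex bumps, and apply the surgery above to every remaining characteristic point. Finally, since all modifications are supported in shrinking neighborhoods and can be taken $C^0$-small, the $\Omega_\mu$ stay nested and proper, so $\{\Omega_\mu\}$ remains an exhaustion; verifying this global matching of admissible characteristic-point types against the topological index constraint while preserving the exhaustion property is where the genuine care is required.
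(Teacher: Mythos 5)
Your first three steps track the paper's construction closely: the paper likewise starts from an arbitrary smooth exhaustion, covers the compact characteristic set by finitely many Folland--Stein charts, and performs exactly the local graph surgery you describe (flattening the graph, or bending it to the paraboloid $t=M_c\left\vert z\right\vert ^{2}$, and interpolating ``steeply'' on an annulus). Your route to isolatedness via Sard's theorem and parametric transversality applied to the zero locus $\Sigma =\{\nabla _{b}r=0\}$ is in fact cleaner than the paper's, which obtains isolatedness as a by-product of the same local flattening.

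The genuine gap is in your final ``global bookkeeping'' step, and the patches you propose there do not work. The index of an isolated characteristic point is the degree of the characteristic vector field on a small sphere in $\partial \Omega _{\mu }$ around it, and that degree is determined by the restriction of the field to the sphere; since your surgery leaves the graph unchanged outside $B(p,2\delta )$, the total index inside $B(p,2\delta )$ cannot change. A strictly convex model point has index $+1$ (in the three-dimensional model $t=\alpha x^{2}+\beta y^{2}$ the linearization of the characteristic field has determinant $4(1+\alpha \beta )$, so index $-1$ occurs exactly when $\alpha \beta <-1$), so an index $-1$ point can be neither deleted nor replaced by any collection of convex bumps supported in a ball: $-1$ is not a sum of $+1$'s. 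Nor can you avoid such points by moving the level value, since the type of the intersection $\Sigma \cap \{r=c\}$ is generically locally constant along the one-dimensional locus $\Sigma $, so a whole arc of saddle-type points meets every nearby level. Removing an index $-1$ point requires a nonlocal cancellation against an index $+1$ point along an arc joining them, which you do not supply; and your claim that both co-orientations of the definite model are admissible for $\Delta _{H}$ is asserted rather than proved, while Theorem \ref{Ap1} as stated covers only the model $\{M_{c}(\left\vert x\right\vert ^{2}+\left\vert y\right\vert ^{2})<t\}$. To be fair, the paper's own proof simply asserts that the steep interpolation creates no new characteristic points in the annulus, which collides with the same index invariance whenever the original local index is not $+1$; you have correctly identified the delicate point, but you have not closed it.
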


\begin{proof}
We construct the exhaustion domain with smooth boundary arbitrarily. Since $%
\partial \Omega _{\mu }$ is compact, we define $\Xi _{\mu }$ the set
consisting all the characteristic points. Then the closure of $\Xi _{\mu }$
is compact. At each point there exist coordinate $V_{p}$ such that we can
express the boundary as $r\left( z,t\right) =t-q\left( z\right) +e\left(
x,y\right) $ in $B_{p}\left( \varepsilon _{p}\right) $ for some $\varepsilon
_{p}$ depend on $p$, where $q\left( z\right) =\alpha _{i}x_{i}^{2}+\beta
_{j}y_{j}^{2}$ for some real numbers $\alpha _{i},\beta _{j}$. Since
injective radius (with respect to \ some adapted metric) is uniformly
bounded below on $\partial \Omega _{\mu }$, $\varepsilon _{p}$ can be chosen
to not depend on $p$ but $\mu $ only. These Folland-Stein coordinate
neighborhoods form an open covering for $\bar{\Xi}_{\mu }$.

Now we claim there is a small modification to boundary so that $\bar{\Xi}%
_{\mu }$ contains only isolated characteristic points.

Assume $B_{p_{i}}\left( \varepsilon \right) $ are the covering of $\bar{\Xi}%
_{\mu }$, we can choose $\varepsilon _{1}<\varepsilon _{2}<\varepsilon $
such that $B_{p_{i}}\left( \varepsilon _{1}\right) $ are still a covering of 
$\bar{\Xi}_{\mu }$. We start at point $p_{1}$. First we deform the graph in
the coordinate of $B_{p_{1}}\left( \varepsilon _{1}\right) $ to plane $t=0$
and smoothly attached to graph on $\partial B_{p_{i}}\left( \varepsilon
_{2}\right) $. Under the deformation we keep point $p_{1}$ as the only
characteristic point. This is possible by noticing that we only need to take 
$q\left( z\right) $ into consideration ( because this term dominate all the
other inside small ball. ) and we only need to consider the case in the
Heisenberg group with graph $t=q\left( z\right) $ in $B_{p_{i}}\left(
\varepsilon \right) $. We modify $q\left( z\right) $ into new one $\tilde{q}%
\left( z\right) $ by define $\tilde{q}\left( z\right) =-\max
\limits_{\left
\vert z\right \vert =\varepsilon _{2}}q\left( z\right) $ in $%
B_{p_{i}}\left( \varepsilon _{1}\right) $ and $\varphi \left( \left \vert
z\right \vert ,\theta \right) $ in $B_{p_{i}}\left( \varepsilon _{2}\right)
\backslash B_{p_{i}}\left( \varepsilon _{1}\right) $ where $\varphi \left(
\left \vert z\right \vert ,\theta \right) $ is a smooth monotone function in 
$\left
\vert z\right \vert $ for each $\theta $ such that the function
smoothly attached to the value $q\left( z\right) $ on $\partial
B_{p_{i}}\left( \varepsilon _{2}\right) $ and $\tilde{q}\left( z\right)
=q\left( z\right) $ on $B_{p_{i}}\left( \varepsilon \right) \cap
B_{p_{i}}^{c}\left( \varepsilon _{2}\right) $. This modification clearly
imply the origin is the only characteristic point in $B_{p_{i}}\left(
\varepsilon _{1}\right) $. Moreover, we can choose $\varphi \left(
\left
\vert z\right \vert ,\theta \right) $ very steep so that all the
point $(z,q\left( z\right) )$ for $z\in B_{p_{1}}\left( \varepsilon
_{2}\right) \backslash \left( 0,0\right) $ are noncharacteristic. We define
the new domain as $\Omega _{\mu ,1}$. Specifically, 
\begin{equation*}
\Omega _{\mu ,1}=\left \{ \Omega _{\mu }\backslash B_{p_{1}}\left(
\varepsilon _{2}\right) \right \} \cup \left( M\cap \left \{ \left(
z,t\right) :t>\tilde{q}\left( z\right) -R\left( z,t\right) \text{ for }z\in
B_{p_{1}}\left( \varepsilon _{2}\right) \right \} \right) .
\end{equation*}

Then we continue the same process on $p_{2}$, and the new domain is $\Omega
_{\mu ,2}$. Observe that the process do not create new characteristic points
but eliminate all the characteristic point inside $B_{p_{i}}\left(
\varepsilon _{2}\right) $ except $p_{i}$. Continuing this process we are
able to deform domain $\Omega _{\mu }$ into new one that only consist
isolated characteristic points on the boundary with $M_{c}=0.$

To modify $M_{c}$ into any value we want is easier. One can do the same
process by deforming the graph into parabolic.
\end{proof}

For convenience, we call the domain in above theorem as sweetsop domain.

\begin{rem}
\bigskip The above theorem can be simplified if we can construct strictly
convex domain in $M$. But the existence to this kind of exhaustion domain
isn't known yet.
\end{rem}

We recall theorems from semigroup method. For the definition of analytic
semigroup, one can refer to definition 12.30 in \cite{r} (\cite{p}). We
cited the characterization of infinitesimal generator of analytic
semigroups. Notation here $X$ is Banach space and $A$ is operator defined on 
$X.$ Note $A$ can be unbounded operator $A:D\left( A\right) \rightarrow X$,
where $D\left( A\right) $ is a subset in $X$ such that $Ax$ can be defined.
As before, we denote $\Gamma _{\beta }$ the Lipschitz classes associated to
nonisotropic distance (refered \cite{J2}) and $\Gamma _{\beta }\left( \bar{%
\Omega},\Lambda ^{1,1}\right) $ the restriction to $\bar{\Omega}$ of
sections of $\Lambda ^{1,1}$ with coefficients in $\Gamma _{\beta }\left( 
\bar{\Omega}\right) $. We denote $\left \Vert {}\right \Vert _{\Gamma
_{\beta }}$ the norm of Banach space $\Gamma _{\beta }\left( \bar{\Omega}%
,\Lambda ^{1,1}\right) $, and $R_{\lambda }\left( A\right) $ as the inverse
operator of $A_{\lambda }:=A-\lambda I$ as $A_{\lambda }$ is one-to-one. The
resolvent set of the operator $A$ is the subset of $%
\mathbb{C}
$ that $R_{\lambda }\left( A\right) $ exists, bounded, and the domain is
dense in $X$. When we apply, we let $X=\Gamma _{\beta }\left( \bar{\Omega}%
,\Lambda ^{1,1}\right) $ and $A=\Delta _{H}$. Here we state general theorems
for following evolution systems 
\begin{equation*}
\dot{u}=Au+f
\end{equation*}%
where $f\in $ $X$.

\begin{thm}
(\cite[Theorem 12.31]{r}) A closed, densely defined operator $A$ in $X$ is
the generator of an analytic semigroup if and only if there exists $\omega $
a real number such that the half-plane $\text{Re}\lambda >\omega $ is
contained in the resolvent set of $A$ and, moreover, there is a constant $C$
such that 
\begin{equation}
\left \Vert R_{\lambda }\left( A\right) \right \Vert \leq \frac{C}{\left
\vert \lambda -\omega \right \vert }  \label{602}
\end{equation}%
for $\text{Re}\lambda >\omega $ and $\left \Vert .\right \Vert $ is the norm
of $X$.
\end{thm}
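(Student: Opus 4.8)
The statement is the classical sectorial characterisation of generators of analytic semigroups, and I would prove it by establishing both implications, with essentially all of the work lying in the sufficiency (the ``if'') direction. After replacing $A$ by $A-\omega I$ I may assume $\omega=0$, so that the hypothesis becomes: $A$ is closed and densely defined, the open right half-plane $\{\operatorname{Re}\lambda>0\}$ lies in the resolvent set of $A$, and $\|R_\lambda(A)\|\le C/|\lambda|$ there. The goal is then to manufacture a bounded analytic semigroup from this single resolvent bound and to recognise $A$ as its generator.

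The first step is to upgrade the half-plane estimate to a sectorial one. Fix $\lambda_0$ on the imaginary axis approached from the right and use the resolvent identity in the form of the Neumann expansion
\[
R_\mu(A)=\sum_{n\ge 0}(\mu-\lambda_0)^n R_{\lambda_0}(A)^{n+1},
\]
which converges whenever $|\mu-\lambda_0|<\|R_{\lambda_0}(A)\|^{-1}$. Since $\|R_{\lambda_0}(A)\|\le C/|\lambda_0|$, the resolvent extends holomorphically to a disc of radius comparable to $|\lambda_0|/C$ about $\lambda_0$, and as $\lambda_0$ ranges over the imaginary axis these discs sweep out a region cutting a fixed positive angle into the left half-plane. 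This shows that $\rho(A)$ contains a sector $\Sigma=\{\lambda\ne 0:|\arg\lambda|<\tfrac{\pi}{2}+\delta\}$ for some $\delta>0$ and that the bound $\|R_\lambda(A)\|\le C'/|\lambda|$ persists on all of $\Sigma$.

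With the sectorial bound available I would define the candidate semigroup by the Dunford-type contour integral
\[
T(t)=\frac{1}{2\pi i}\int_\Gamma e^{\lambda t}R_\lambda(A)\,d\lambda,
\]
where $\Gamma$ runs inside $\Sigma$ from $\infty e^{-i\theta}$ to $\infty e^{i\theta}$ with $\tfrac{\pi}{2}<\theta<\tfrac{\pi}{2}+\delta$ (the orientation fixed by the sign convention for $R_\lambda$). The estimate $\|R_\lambda(A)\|\le C'/|\lambda|$ together with the decay of $e^{\lambda t}$ along the two rays, where $\operatorname{Re}\lambda<0$, makes the integral absolutely convergent; deforming $\Gamma$ and differentiating under the integral sign then show that $t\mapsto T(t)$ is bounded and extends holomorphically to a sector $|\arg t|<\delta$. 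The semigroup law $T(t+s)=T(t)T(s)$ follows from Fubini and the resolvent identity after shifting one of the two contours, while strong continuity $T(t)x\to x$ as $t\downarrow 0$ and the identification of the generator with $A$ follow from the standard facts $\lambda R_\lambda(A)x\to x$ and from interchanging the Laplace transform with the contour integral.

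For the converse I would assume $A$ generates an analytic semigroup $T(t)$, analytic on $|\arg t|<\delta$ and satisfying $\|T(t)\|\le Me^{\omega t}$ together with the derivative bound $\|AT(t)\|\le (M/t)e^{\omega t}$; representing the resolvent (in the appropriate sign convention) as the Laplace transform $\int_0^\infty e^{-\lambda t}T(t)\,dt$ for $\operatorname{Re}\lambda>\omega$ and rotating the integration ray into the sector of analyticity yields exactly $\|R_\lambda(A)\|\le C/|\lambda-\omega|$ on the half-plane. The main obstacle is the sufficiency direction, and within it the verification that the contour integral produces a genuinely strongly continuous analytic semigroup whose generator is exactly $A$ rather than a proper extension; this rests on the Neumann-series passage from the half-plane bound to the full sector and on the careful contour deformations needed for the semigroup law and the generator computation. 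The converse is comparatively routine once the analytic-semigroup derivative estimates are in hand.
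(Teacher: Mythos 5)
The paper offers no proof of this statement at all: it is quoted verbatim from Renardy--Rogers \cite[Theorem 12.31]{r} purely as a tool for the semigroup construction in the appendix. Your outline is the standard classical argument for exactly that theorem --- Neumann-series extension of the resolvent from the half-plane to a sector, the Dunford contour integral for the semigroup, and the Laplace-transform representation with a rotated ray for the converse --- and it is correct as a proof sketch, matching the approach of the cited source.
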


\begin{thm}
(\cite[Theorem 12.33]{r}) \label{apx1}Let $A$ be the infinitesimal generator
of an analytic semigroup and assume that the spectrum of $A$ is entirely to
the left of the line $\text{Re}\lambda =\omega $. Then there exists a
constant $M$ such that 
\begin{equation*}
\left \Vert e^{At}\right \Vert \leq Me^{\omega t},
\end{equation*}%
where $\left \Vert .\right \Vert $ is the norm of $X$.
\end{thm}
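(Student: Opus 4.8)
The plan is to recover the growth bound from the Dunford--Taylor contour representation of the semigroup, feeding in the sectorial resolvent estimate that accompanies analyticity together with the assumed location of the spectrum.

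First I would upgrade the resolvent input. The preceding characterization supplies a half-plane on which $\|R_{\lambda}(A)\|\le C/|\lambda-\omega_{0}|$; since $A$ generates an \emph{analytic} semigroup this estimate in fact persists on a full sector $\{\lambda:\ |\arg(\lambda-\omega_{0})|\le \tfrac{\pi}{2}+\delta\}$ for some $\delta>0$. Invoking the hypothesis that $\sigma(A)$ lies strictly to the left of $\text{Re}\,\lambda=\omega$, I would then slide the apex of this sector to the point $\omega$: for a half-aperture $\theta$ with $\tfrac{\pi}{2}<\theta<\tfrac{\pi}{2}+\delta$ the two rays $\{\omega+re^{\pm i\theta}:\ r\ge 0\}$ together with the wedge they bound remain in the resolvent set, and there
\[
\|R_{\lambda}(A)\|\le \frac{M_{0}}{|\lambda-\omega|}.
\]

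Next I would write the analytic semigroup as the contour integral
\[
e^{At}=\frac{1}{2\pi i}\int_{\Gamma}e^{\lambda t}\,(\lambda I-A)^{-1}\,d\lambda ,
\]
where $\Gamma=\Gamma_{-}\cup\Gamma_{0}\cup\Gamma_{+}$ is traversed in the direction of increasing $\text{Im}\,\lambda$, with the two rays $\Gamma_{\pm}=\{\omega+re^{\pm i\theta}:\ r\ge 1/t\}$ and the circular arc $\Gamma_{0}=\{\omega+t^{-1}e^{i\psi}:\ |\psi|\le\theta\}$ of radius $1/t$ centred at $\omega$; note $\|(\lambda I-A)^{-1}\|=\|R_{\lambda}(A)\|$, so the estimate above applies throughout. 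The choice of arc radius $1/t$ is precisely the device that neutralizes the $1/|\lambda-\omega|$ singularity at the apex. On the rays I would use $\text{Re}\,\lambda=\omega+r\cos\theta=\omega-r|\cos\theta|$ (valid since $\theta>\pi/2$) together with $\|R_{\lambda}\|\le M_{0}/r$; the substitution $u=rt$ turns $\int_{1/t}^{\infty}e^{t\,\text{Re}\,\lambda}(M_{0}/r)\,dr$ into $M_{0}e^{\omega t}\int_{1}^{\infty}u^{-1}e^{-u|\cos\theta|}\,du$, a finite constant times $e^{\omega t}$. On the arc I would use $|\lambda-\omega|=1/t$ (so $\|R_{\lambda}\|\le M_{0}t$), arc length $2\theta/t$, and $\text{Re}\,\lambda\le\omega+1/t$ (so $e^{t\,\text{Re}\,\lambda}\le e\cdot e^{\omega t}$), giving a contribution bounded by a fixed multiple of $M_{0}e^{\omega t}$. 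Adding the three pieces yields $\|e^{At}\|\le Me^{\omega t}$ with $M$ independent of $t$.

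The step I expect to be the main obstacle is the second one: promoting the half-plane resolvent estimate of the analytic-semigroup characterization to a genuine \emph{sectorial} bound whose vertex can be placed exactly at $\omega$. This is where one must combine analyticity (which guarantees the sector exists) with the spectral-location hypothesis (which guarantees the tilted wedge at $\omega$ stays off the spectrum and keeps the constant $M_{0}$ uniform up to the apex). Once that sectorial estimate with the correct vertex is secured, the contour computation of the last step is routine, and the $1/t$ arc is exactly what renders the otherwise logarithmically divergent ray integrals convergent.
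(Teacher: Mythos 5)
The paper offers no proof of this statement; it is quoted verbatim from Renardy--Rogers (Theorem 12.33), so there is nothing internal to compare your argument against. Your strategy --- the Dunford--Taylor contour representation with the vertex moved to $\omega$ and an arc of radius $1/t$ to tame the $1/|\lambda-\omega|$ singularity --- is the standard textbook route, and the estimates on the rays and on the arc in your final step are correct as computed.

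There is, however, one genuine gap, and it sits exactly where you predicted. You assert that for any half-aperture $\theta$ with $\tfrac{\pi}{2}<\theta<\tfrac{\pi}{2}+\delta$ the wedge $\{\omega+re^{i\psi}:\ r\ge 0,\ |\psi|\le\theta\}$ lies in the resolvent set, justified by saying the spectral-location hypothesis ``guarantees the tilted wedge at $\omega$ stays off the spectrum.'' That justification does not work: since $\theta>\pi/2$, the wedge contains points with $\operatorname{Re}\lambda<\omega$ near the apex, and the hypothesis $\sigma(A)\subset\{\operatorname{Re}\lambda<\omega\}$ says nothing about such points; nor need they lie in the original sector based at $\omega_{0}$ when $\omega<\omega_{0}$. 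As literally stated (for an arbitrary $\theta<\pi/2+\delta$) the claim is false. The repair needs two further observations. First, one may assume $\omega<\omega_{0}$, since otherwise the standard analytic-semigroup bound $\|e^{At}\|\le M_{1}e^{\omega_{0}t}$ already gives the conclusion. Second, the set $K=\{\operatorname{Re}\lambda\ge\omega\}\setminus\{|\arg(\lambda-\omega_{0})|<\tfrac{\pi}{2}+\delta\}$ is compact and disjoint from the closed set $\sigma(A)$, hence possesses an open neighborhood $U\subset\rho(A)$ on which $\|R_{\lambda}(A)\|$ is uniformly bounded; only after this does one choose $\theta>\pi/2$ \emph{close enough} to $\pi/2$ that the wedge at $\omega$ is covered by the original sector together with $\{\operatorname{Re}\lambda\ge\omega\}\cup U$. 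The uniform bound $C_{1}$ on the bounded piece $\{|\lambda-\omega|\le R_{0}\}$ of the wedge then converts into $C_{1}R_{0}/|\lambda-\omega|$ there, which combined with the large-$|\lambda|$ estimate yields the sectorial bound $\|R_{\lambda}(A)\|\le M_{0}/|\lambda-\omega|$ you need. With that compactness step supplied and the aperture chosen accordingly, the rest of your argument goes through.
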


One can refer to section 7.1 in \cite{p} or page 421 in \cite{e} for the
application of semigroup theory for $A$ is a strong ellieptic operator. In
our case, the missing boundary regularity is replaced by theorem \ref{Ap1}
(refer to \cite{gv}, \cite{gv2}). Then one can follow Stewart \cite{sh} and
consider H\"{o}lder spaces as interpolation space \cite{la} to obtain the
resolvent estimates \ref{602}. As a result, the regularity of the parabolic
systems follows by theorem \ref{apx1}.

In conclusion, we are able to choose exhaustion domain with $\beta _{0}$
large enough, then follow theorem above, we can choose $\beta $ large enough
to make sure the function space $X$ is contained in $C^{2,\alpha }$. This is
possible by relation $C^{\beta }\subset \Gamma _{\beta }\subset C^{\beta /2}$
as in 20.5, 20.6 of \cite{fs}. This completes the proof of Proposition \ref%
{prop2-1}.

\bigskip

\end{document}